\newtheorem{theorem}{Theorem}[section]
\newtheorem{corollary}[theorem]{Corollary}
\newtheorem{lemma}[theorem]{Lemma}
\newcommand{\imod}[1]{\allowbreak\mkern4mu({\operator@font mod}\,\,#1)}
\newtheorem*{thmA}{Theorem~A}
\newtheorem*{thmB}{Theorem~B}
\newtheorem*{corC}{Corollary~C}
\newcommand{\Irr}{{\mathrm {Irr}}}
\newcommand{\Aut}{{\mathrm {Aut}}}
\newcommand{\PSL}{{\mathrm {PSL}}}
\newcommand{\SSS}{\mathrm{S}}
\newcommand{\A}{\mathrm{A}}
\newcommand{\Atlas}{{\sf Atlas}}
\newcommand{\GAP}{{\sf GAP}}
\newcommand{\Van}{{\mathrm {Van}}}
\newcommand{\ord}{{\mathrm {ord}}}
\newcommand{\vC}{v\mathcal{C}}
\theoremstyle{definition}
\begin{document}
\title[\textbf{Restriction on the vanishing set}]{\textbf{Finite groups with some restriction on the vanishing set}}

\author{Sesuai Y. Madanha}
\address{School of Mathematics Statistics and Computer Science, University of KwaZulu-Natal, Durban, South Africa}
\address{Permanent Address: Department of Mathematics and Applied Mathematics, University of Pretoria, Private Bag X20, Hatfield, Pretoria 0028, South Africa}
\email{sesuai.madanha@up.ac.za}

\author{Bernardo G. Rodrigues}
\address{School of Mathematics Statistics and Computer Science, University of KwaZulu-Natal, Durban, South Africa}
\address{Permanent Address: Department of Mathematics and Applied Mathematics, University of Pretoria, Private Bag X20, Hatfield, Pretoria 0028, South Africa}
\email{bernardo.rodrigues@up.ac.za}

\thanks{}

\subjclass[2010]{Primary 20C15}

\date{\today}

\keywords{orders of vanishing elements, solvable groups, supersolvable groups, normal $ 2 $-complement}

\begin{abstract}
Let $ x $ be an element of a finite group $ G $ and denote the order of $ x $ by $ \ord(x) $. We consider a finite group $ G $ such that $ \gcd(\ord(x),\ord(y))\leqslant 2 $ for any two vanishing elements $ x $ and $ y $ contained in distinct conjugacy classes. We show that such a group $ G $ is solvable. When $ G $ with the property above is supersolvable, we show that $ G $ has a normal metabelian $ 2 $-complement.
\end{abstract}

\maketitle


\section{Introduction}

Let $ G $ be a finite group. An element $ g\in G $ is a vanishing element if there exists an irreducible character $ \chi $ of $ G $ such that $ \chi(g)=0 $. The set of all vanishing elements of $ G $ is denoted by $ \Van(G)$. A classical theorem of Burnside \cite[Theorem 3.15]{Isa06} implies that $ \Van(G) $ is non-empty when $ G $ is non-abelian. Note that
\begin{center}
$ \Van(G)=\bigcup_{i=1}^{r} \vC_{i}  $,
\end{center}
where each $ \vC_{i} $ is a vanishing conjugacy class. We denote the order of the elements contained in a vanishing conjugacy class $ \vC_{i} $ by $ \ord(v\mathcal{C}_{i}) $. Many authors have studied finite groups $ G $ with certain restrictions on the set $ \Van(G)$ (see \cite{DPSS09,BDS09, DPSS10a,DPSS10b,ZLS11,Rob19}). We shall discuss some of that work here. Let $ p $ be a fixed prime. Dolfi, Pacifici, Sanus and Spiga in \cite{DPSS09} studied finite groups $ G $ such that $ \ord (\vC_{i})\neq p^{n} $ for some $ n $, for all $ i\in \{1, 2, \dots, r \} $. They showed that $ G $ has a normal Sylow $ p $-subgroup \cite[Theorem A]{DPSS09}. On the other hand, in \cite{BDS09}, the authors studied finite groups $ G $ such that $ \ord (\vC_{i})= p^{n} $ for some $ n $, for all $ i\in \{1, 2, \dots, r \} $, and proved that $ G $ is either a $ p $-group or $ G $ has a homomorphic image which is a Frobenius group with a complement of $ p $-power order. Robati \cite{Rob19} recently proved that if $ \Van(G) $ contains three conjugacy classes of $ G $, then the group $ G $ is solvable.

In this article, we investigate finite groups $ G $ with the property below:

\[\mbox{\emph{$ \gcd(\ord(v\mathcal{C}_{i}),\ord(v\mathcal{C}_{j}))=1 $ for $ i\neq j $,  $ i,j\in \{1,2, \dots, r\} $.} \label{e:star} \tag{$\star$}}\] 

We also investigate finite groups $ G $ with a more general property:

\[\mbox{\emph{$ \gcd(\ord(v\mathcal{C}_{i}),\ord(v\mathcal{C}_{j}))\leqslant 2 $ for $ i\neq j $,  $ i,j\in \{1,2, \dots, r\} $.} \label{ee:star} \tag{$\star\star$}}\] 

In particular, using the classification of finite simple groups we show that if $ G $ has property \eqref{ee:star}, then $ G $ is solvable :

\begin{thmA}
Let $ G $ be a finite group. If $ G $ satisfies property \eqref{ee:star}, then $ G $ is solvable.
\end{thmA}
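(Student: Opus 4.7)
The plan is to argue by contradiction, selecting $G$ a counterexample of minimum order. First I would establish that property \eqref{ee:star} passes to quotients: for $N \trianglelefteq G$ and $uN, vN$ vanishing in $G/N$ in distinct $G/N$-classes, inflating characters of $G/N$ to $G$ shows that $u, v$ are vanishing elements of $G$ lying in distinct $G$-classes, while $\ord(uN) \mid \ord(u)$ and $\ord(vN) \mid \ord(v)$ force
\[
\gcd(\ord(uN),\ord(vN)) \mid \gcd(\ord(u),\ord(v)) \leqslant 2.
\]
Hence every proper quotient of $G$ is a smaller group satisfying \eqref{ee:star}, so solvable by minimality.

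Next I would use the standard consequences for groups in which every proper quotient is solvable but the group itself is not. $G$ has a unique minimal normal subgroup $N$ (if $N_1 \neq N_2$ were both minimal normal, then $N_1 \cap N_2 = 1$ would yield $G \hookrightarrow G/N_1 \times G/N_2$, a solvable group), and $N$ is non-solvable (else $N$ and $G/N$ both solvable would force $G$ solvable). Thus $N = S_1 \times \cdots \times S_k$ with the $S_i$ pairwise isomorphic to a fixed non-abelian simple group $S$, and $\Centralizer_G(N)=1$, so $G$ embeds in $\Aut(N) = \Aut(S) \wr \Sym_k$.

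The core of the proof is then a CFSG-based claim: for every non-abelian simple group $S$ appearing as a composition factor of such an $N$, one can find two vanishing elements $x_1, x_2$ of $G$, in distinct $G$-classes, with $\gcd(\ord(x_1), \ord(x_2)) \geqslant 3$. The natural construction takes $x_i = (y_i, y_i, \ldots, y_i) \in N$ with $y_i \in S$ chosen so that (i) some character $\psi_i \in \Irr(S)$ vanishes at $y_i$ and remains vanishing under all the $\Aut(S)$-twists that arise from the action of $G$ on the factors of $N$, and (ii) $\gcd(\ord(y_1), \ord(y_2)) \geqslant 3$. Condition (i) combined with Clifford theory produces an irreducible character of $G$ vanishing at $x_i$, and distinct orders distinguish the $G$-classes. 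The input is known results on zeros of characters of non-abelian simple groups (Malle--Navarro--Olsson, Dolfi--Pacifici--Sanus, and related work).

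The main obstacle is the CFSG case analysis ensuring existence of the pair $(y_1, y_2)$ for every non-abelian simple group $S$. For alternating groups one chooses cycle types sharing a common odd prime; for sporadic groups the \Atlas{} provides direct verification; for groups of Lie type one uses regular semisimple or unipotent elements of suitable order together with their character-theoretic zeros. The small cases $\A_5$, $\A_6$, $\PSL_2(q)$ for small $q$, and low-rank exceptional groups will require individual attention, since the set of vanishing classes is smaller there and the $\Aut(S)$-fusion of classes less predictable. Once the pair $(y_1, y_2)$ is produced in each case, the corresponding elements $x_1, x_2 \in N$ violate \eqref{ee:star}, yielding the contradiction and completing the proof.
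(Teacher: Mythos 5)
Your reduction steps (minimal counterexample, quotient-closure of \eqref{ee:star}, uniqueness of the minimal normal subgroup $N=S_1\times\cdots\times S_k$, $\Centralizer_G(N)=1$) agree with the paper and are fine. The gap is that your ``core CFSG-based claim'' is essentially the whole theorem, and you do not prove it; you only describe a recipe, and the recipe has two soft spots. First, the Clifford-theoretic transfer is not automatic: if $\chi\in\Irr(G)$ lies over $\theta=\psi\times\cdots\times\psi$, then $\chi_N$ is a multiple of the sum of the $G$-conjugates of $\theta$, so to conclude $\chi(x_i)=0$ at your diagonal element $x_i=(y_i,\ldots,y_i)$ you need \emph{every} $G$-conjugate of $\theta$ to vanish there, i.e.\ $\psi^{a}(y_i)=0$ for all relevant $a\in\Aut(S)$; producing such Aut-stable zeros for an arbitrary simple group $S$ is exactly the hard content, and you give no mechanism for it beyond citing Malle--Navarro--Olsson. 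Second, the existence of a pair $(y_1,y_2)$ in distinct classes, not fused in $G$, with $\gcd(\ord(y_1),\ord(y_2))\geqslant 3$, is genuinely problematic in precisely the families you defer: for instance in $^{2}\mathrm{B}_{2}(q)$ the odd element orders are pairwise coprime (dividing $q-1$, $q\pm\sqrt{2q}+1$), so the only usable pairs involve elements of order $4$ or of equal odd order, and one must then exclude fusion under the outer automorphism group --- this is the delicate point the paper settles by a special argument (two classes of order-$4$ elements cannot be fused by an odd-order outer automorphism group).

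For contrast, the paper closes these holes much more cheaply, which shows your full ``CFSG sweep'' is not needed. For $k\geqslant 2$ it avoids diagonal elements and Aut-stability entirely: it takes a character $\theta$ of $N$ that \emph{extends} to $G$ (Magaard--Tong-Viet) and mixes components, pairing a zero $x_1\in S_1$ of $\phi_1$ with a $2$-element or a $q$-element ($q\geqslant 5$) of $S_2$, so the extension itself vanishes on both test elements. For $k=1$ (almost simple $G$) it uses $p$-defect zero characters of $N$ (Granville--Ono) together with Brough's lemma, which makes every element of $N$ of order divisible by $p$ vanishing in $G$ regardless of class fusion; then the key observation is that if $N$ has any element of order $2r$ with $r$ an odd prime one is already done, and by Suzuki's theorem on simple groups whose involution centralizers are $2$-groups only $\PSL_2(p)$ with $p$ Fermat or Mersenne, $\PSL_2(9)$, $\PSL_3(4)$ and $^{2}\mathrm{B}_{2}(2^{2n+1})$ remain, handled individually (alternating and sporadic groups are dispatched directly by defect-zero data). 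As written, your proposal is a correct reduction plus an unproven central existence claim, so it does not yet constitute a proof.
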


\subsection*{Remark} If $ \gcd(\ord(v\mathcal{C}_{i}),\ord(v\mathcal{C}_{j}))\leqslant 3 $, then $ G $ is not necessarily solvable for $ \SSS_{5} $ satisfies this property. Let $ \mathrm{Vo}(G) $ be the set of orders of vanishing elements of $ G $. Then if for every $ a,b \in \mathrm{Vo}(G) $, $ \gcd(a,b)= 1 $, then $ G $ is also not necessarily solvable: $ \A_{5} $ is a counterexample. 

A theorem of Thompson \cite[Corollary 12.2]{Isa06} states that, given a prime number $ p $, if every character degree of a non-linear character of $ G $ is a multiple of $ p $, then the group $ G $ has a normal $ p $-complement.  In \cite[Corollary B]{DPSS09}, it was shown that if $ G $ is a finite group and if $ p\mid a $ for all $ a \in \mathrm{Vo}(G) $ for some fixed prime $ p $, then $ G $ has a normal nilpotent $ p $-complement. This does not necessarily hold when $ G $ satisfies property \eqref{ee:star}. An example is $ \SSS_{4} $, since $ \gcd(\ord(v\mathcal{C}_{i}),\ord(v\mathcal{C}_{j}))\leqslant 2 $ for all $ v\mathcal{C}_{i}, v\mathcal{C}_{j}\subseteq \Van(\SSS_{4}) $, that is, $ \SSS_{4} $ satisfies property \eqref{ee:star} but $ \SSS_{4} $ does not have a normal $ 2 $-complement or $ 3 $-complement. However, if $ G $ is supersolvable or $ \textbf{O}_{2}(G)=1 $, then $ G $ has a normal $ 2 $-complement with one exception: some Frobenius groups with a homomorphic image isomorphic to $ \SSS_{4} $, as the following result states: 

\begin{thmB}
Let $ G $ be a finite non-abelian group satisfying property \eqref{ee:star}.
\begin{itemize}
\item[(a)] If $ G $ is supersolvable, then $ G $ has a normal metabelian $ 2 $-complement
\item[(b)] If $ \textbf{O}_{2}(G)=1 $, then either
\begin{itemize}
\item[(i)] $ G $ has a normal $ 2 $-complement of Fitting height at most $ 3 $, or
\item[(ii)] $ G $ is a Frobenius group which has an abelian kernel and a Frobenius complement isomorphic to $ \mathrm{S}_{4} $.
\end{itemize}
\end{itemize}
\end{thmB}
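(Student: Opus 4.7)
By Theorem~A, $G$ is solvable, so we may freely use Hall theory, Fitting subgroups, and Clifford theory. The key structural consequence of $(\star\star)$, which I will invoke throughout, is this: for each odd prime $p$, all vanishing elements of $G$ whose order is divisible by $p$ form a single $G$-conjugacy class, since otherwise two such elements in distinct classes would yield $\gcd \geqslant p > 2$. I will also use the Dolfi--Pacifici--Sanus--Spiga theorem that every nonvanishing element of odd order of a solvable group lies in $F(G)$, and the routine fact that $G/N$ inherits $(\star\star)$ for every normal subgroup $N \trianglelefteq G$ (since inflation preserves vanishing, and $\ord(\bar x) \mid \ord(x)$).

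For part~(a), supersolvability of $G$ forces a Sylow tower in decreasing order of primes, so the Hall $2'$-subgroup $H$ is automatically a normal $2$-complement. Since $H$ is itself supersolvable, $H/F(H)$ is abelian and $H' \leqslant F(H)$, so $H$ is metabelian if and only if $F(H)$ is abelian. Using $\OO_p(H) = \OO_p(G)$ for each odd $p$, the task reduces to proving $\OO_p(G)$ is abelian for every odd prime $p$ dividing $|G|$. Suppose for contradiction that $P := \OO_p(G)$ is non-abelian; then $P$ carries non-linear irreducible characters. The plan is to produce two non-$G$-conjugate vanishing elements of $G$ of order divisible by $p$: one inside $P$, obtained by Clifford-lifting a non-linear character of $P$ vanishing on a $P$-class to a character of $G$ vanishing at a $p$-power element of $P$ (the $M$-group property of supersolvable $G$ is essential here); and a second either from another $G$-orbit of $P$-classes on which some non-linear character of $P$ vanishes or, if $p$ also divides $|G/F(G)|$, from an element of order $p$ outside $F(G)$ furnished as vanishing by the DPSS theorem. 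Either alternative violates the single-class consequence of $(\star\star)$, and so $\OO_p(G)$ must be abelian.

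For part~(b), set $F := F(G)$; since $\OO_2(G) = 1$ and $G$ is solvable, $F$ has odd order and $C_G(F) \leqslant F$, so $\bar G := G/F$ embeds into $\Aut(F)$. When $\bar G$ is a $2$-group, $F$ is itself a normal $2$-complement of Fitting height~$1$ and conclusion~(i) holds. If $\bar G$ is non-$2$ and supersolvable, part~(a) applied to $\bar G$ (which inherits $(\star\star)$) produces a normal metabelian $2$-complement $\bar K$ of $\bar G$, whose preimage in $G$ is a normal $2$-complement of Fitting height at most~$3$ (with nilpotent layers $F$, $F(\bar K)/F$ and $\bar K/F(\bar K)$). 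The remaining case, where $\bar G$ is not supersolvable, is handled by a case analysis on subgroups of $\Aut(F)$ satisfying $(\star\star)$: since $\mathrm{S}_4$ is the minimal solvable non-supersolvable group enjoying $(\star\star)$, and since the single-class constraint for each odd prime together with the prohibition of $\A_4$ (for which the two classes of $3$-cycles violate $(\star\star)$) excludes all other candidates, one is forced into $\bar G \cong \mathrm{S}_4$. In this exceptional case, a vanishing $3$-cycle of $\bar G$ lifted to $G$ must not combine with any non-trivial element of $F$ into a vanishing element of order $3q$ with $q \mid |F|$, and this rules out every non-trivial fixed point of $\mathrm{S}_4$ on $F$; hence $\mathrm{S}_4$ acts fixed-point-freely on $F$, and $G$ is the claimed Frobenius group with abelian kernel $F$ and complement $\mathrm{S}_4$. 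The main obstacles are the character-theoretic production step in~(a) (passing vanishing information from $\Irr(P)$ up to $\Irr(G)$ while guaranteeing that the resulting $p$-power vanishing elements stay in distinct $G$-classes) and the classification of the non-supersolvable branch of~(b) as exactly $\mathrm{S}_4$.
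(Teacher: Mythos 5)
Your overall strategy (reduce (a) to abelianness of the odd Fitting factors, and in (b) analyse $\bar G=G/\mathbf{F}(G)$ directly) is a legitimately different route from the paper, which instead splits (a) according to $|\pi(\mathbf{F}(G))|$ and proves (b) via the vanishing prime graph: connectedness forces every vanishing order to be even (so \cite[Corollary B]{DPSS09} gives a normal nilpotent $2$-complement), while disconnectedness forces $G$ to be Frobenius or nearly $2$-Frobenius by Theorem \ref{DPSS10bTheoremA}, after which Brown's classification (Theorem \ref{Bro01Theorem1.4}) isolates the $\mathrm{S}_4$ exception. However, as it stands your proposal has genuine gaps at exactly the points you flag as ``obstacles''. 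In (a), the step that would derive a contradiction from $P=\mathbf{O}_p(G)$ non-abelian is only a plan: your dichotomy (``a second $G$-orbit of vanishing $P$-classes, or $p$ divides $|G/\mathbf{F}(G)|$'') does not cover the case in which all vanishing elements of $G$ lying in $P$ form a single $G$-class and $p\nmid |G/\mathbf{F}(G)|$, and the Clifford/$M$-group lifting step is never justified. The paper's mechanism here is different and does the real work: by Lemma \ref{INW99TheoremD} (in the form that non-vanishing elements of a supersolvable group lie in $\mathbf{Z}(\mathbf{F}(G))$) every element of $Q_i\setminus\mathbf{Z}(Q_i)$ is already vanishing in $G$, and Corollary \ref{DPSS10aCorollary2.6} then manufactures a vanishing element whose order is divisible by \emph{every} prime of $|\mathbf{F}(G)|$, which is automatically in a different class from a vanishing $q_i$-element when $|\pi(\mathbf{F}(G))|\geqslant 2$; the remaining single-prime case needs a separate argument (the paper uses a chief-factor argument together with Lemma \ref{QiaLemma2}), which your sketch does not supply.

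In (b) the decisive assertion --- that a non-supersolvable $\bar G$ inheriting $(\star\star)$ must be isomorphic to $\mathrm{S}_4$ because $\mathrm{S}_4$ is the ``minimal'' such group --- is not an argument: minimality of one example does not exclude larger non-supersolvable groups satisfying $(\star\star)$, and no classification is offered. (Also, $G/F$ does not embed in $\Aut(F)$; only $G/\mathbf{Z}(F)$ does, since $\mathbf{C}_G(F)=\mathbf{Z}(F)$.) Finally, even granting $\bar G\cong\mathrm{S}_4$, your fixed-point argument only excludes fixed points of the $3$-elements; to reach conclusion (b)(ii) one must show every non-trivial element of a complement acts fixed-point-freely on $F$, that $G$ splits over $F$, and that the kernel is abelian, none of which is established. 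These are precisely the places where the paper leans on Theorem \ref{DPSS10bTheoremA}, \cite[Proposition 3.2]{DPSS10b} and Theorem \ref{Bro01Theorem1.4}, so the proposal cannot be regarded as a complete alternative proof without substantial additional work at these two points.
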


In \cite[Proposition 2.7]{Chi99}, Chillag showed that if $ G $ is a non-abelian group, then $ G $ is a Frobenius group with an abelian odd order kernel and a complement of order $ 2 $ if and only if every irreducible character of $ G $ vanishes on at most one conjugacy class. In this article, we prove a new characterisation of these Frobenius groups:

\begin{corC}
Let $ G $ be a finite non-abelian group. Then $ G $ has property \eqref{e:star} if and only if $ G $ is a Frobenius group with an abelian kernel and complement of order two.
\end{corC}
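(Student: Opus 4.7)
The plan is to prove the two implications of the equivalence separately: the backward direction by a direct character-theoretic computation in Frobenius groups, and the forward direction by reducing to Chillag's criterion \cite[Proposition~2.7]{Chi99}, which characterizes such Frobenius groups as precisely those groups in which every irreducible character vanishes on at most one conjugacy class.

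For $(\Leftarrow)$, let $G = K \rtimes \langle t\rangle$ with $K$ abelian and $t$ an involution. Then $|K|$ is odd (a Frobenius kernel and complement have coprime orders) and $t$ inverts $K$, so the $|K|$ elements of $G\setminus K$ are all involutions and form a single $G$-conjugacy class. By standard Frobenius character theory, $\Irr(G)$ consists of the two linear characters of $G/K\cong C_{2}$ together with the degree-two characters $\lambda^{G}$ for $\lambda\in\Irr(K)\setminus\{1_{K}\}$, and each $\lambda^{G}$ vanishes identically on $G\setminus K$ by the induced-character formula (no $G$-conjugate of an element of $G\setminus K$ lies in $K$). Thus $\Van(G)$ is a single conjugacy class, and property~\eqref{e:star} holds vacuously.

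For $(\Rightarrow)$, suppose $G$ satisfies \eqref{e:star}. Since \eqref{e:star} implies \eqref{ee:star}, Theorem~A gives that $G$ is solvable. By Chillag's criterion, to conclude it suffices to prove that every $\chi\in\Irr(G)$ vanishes on at most one conjugacy class, and this will follow once we establish the stronger statement that $\Van(G)$ is itself a single conjugacy class. Assume for contradiction that $\Van(G) = \vC_{1}\cup\cdots\cup\vC_{r}$ with $r\geq 2$, so there exist two vanishing classes with coprime orders.

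The structural dichotomy of Theorem~B then drives the remaining argument. If $\OO_{2}(G)=1$, Theorem~B(b) leaves only two options: $G$ is Frobenius with complement~$\SSS_{4}$, which is ruled out because $\SSS_{4}$ has vanishing classes of orders $2$ and $4$ (giving $\gcd=2$ and contradicting \eqref{e:star}); or $G=N\rtimes P$ with $N$ a normal $2$-complement and $P\in\Syl_{2}(G)$ acting faithfully. In this second subcase one performs a Clifford-theoretic analysis, decomposing $\Irr(G)$ via $P$-orbits on $\Irr(N)$, and uses the pairwise coprimality of orders required by \eqref{e:star} to force $|P|=2$ with $P$ inverting $N$, which returns us to the Frobenius configuration of $(\Leftarrow)$. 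The case $\OO_{2}(G)\neq 1$ is handled either via Theorem~B(a) in the supersolvable subcase or by a reduction through the quotient $G/\OO_{2}(G)$. The main obstacle is precisely this Clifford-theoretic bookkeeping in the normal $2$-complement case: ruling out $|P|>2$ requires a careful identification of which $G$-classes support zero values of the characters induced from $N$, after which the coprimality hypothesis eliminates all configurations other than the Frobenius one with complement of order $2$.
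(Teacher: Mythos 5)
Your backward direction is fine (and fills in the computation the paper only asserts). The forward direction, however, is a plan rather than a proof, and the gap is exactly where you say it is: after invoking Theorem~A and Theorem~B you are left with "a Clifford-theoretic analysis \dots forces $|P|=2$ with $P$ inverting $N$", which you never carry out, and this is the entire content of the implication. Theorem~B(b)(i) only gives a normal $2$-complement $N$ of Fitting height at most $3$; getting from that to "$|P|=2$, $N$ abelian, $P$ acting fixed-point-freely" under \eqref{e:star} is real work that is not reducible to bookkeeping one can wave at. Worse, your case split does not cover all of $G$: if $\OO_{2}(G)\neq 1$ and $G$ is not supersolvable, Theorem~B says nothing, and "a reduction through the quotient $G/\OO_{2}(G)$" is unsubstantiated -- \eqref{e:star} does pass to the quotient (Lemma~\ref{G/Nsatifies}(a)), but you have set up no induction, the quotient may be abelian, and lifting structure back to $G$ is not automatic; in fact, since the target Frobenius group has odd abelian kernel and complement of order two, you must actually \emph{prove} $\OO_{2}(G)=1$, i.e.\ derive a contradiction from $\OO_{2}(G)\neq 1$, and no such argument is given. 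Even in the subcase you do address, Theorem~B(a) only yields a metabelian normal $2$-complement, which again falls short of the stated conclusion.

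For comparison, the paper's route avoids all of this and is worth internalising: under \eqref{e:star} the orders of distinct vanishing classes are coprime, so distinct vanishing classes contribute distinct connected components of the vanishing prime graph $\Gamma(G)$; since $G$ is solvable (Theorem~A), Theorem~\ref{DPSS10bTheoremA} bounds the number of components, hence of vanishing classes, by two. Consequently every irreducible character vanishes on at most two conjugacy classes, and the classification of Theorem~\ref{BCG00Theorem1} applies: case (a) is excluded by solvability, case (b)(ii) produces two vanishing classes of elements of order $3$, contradicting \eqref{e:star}, and in case (b)(i) the single vanishing class of involutions together with Lemma~\ref{QiaLemma2}(b) forces $Z=1$ and $G$ Frobenius with abelian kernel and complement of order two. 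This is precisely the first paragraph of the proof of Theorem~B, which is what the paper's proof of Corollary~C cites; if you want to salvage your write-up, replace your appeal to the \emph{statement} of Theorem~B by this argument (or by your own completed Clifford analysis, which would be considerably longer).
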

\section{Preliminaries}
In this section we shall list some properties of vanishing elements needed to prove our results. 
\begin{lemma}\label{G/Nsatifies}
Let $ G $ be an finite group and let $ N $be a normal subgroup of $ G $. Then the following statements hold:
\begin{itemize}
\item[(a)] If $ G $ satisfies property \eqref{e:star}, then $ G/N $ satisfies property \eqref{e:star}.
\item[(b)] If $ G $ satisfies property \eqref{ee:star}, then $ G/N $ satisfies property \eqref{ee:star}.
\end{itemize}
\end{lemma}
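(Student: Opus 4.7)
The plan is to exploit the standard fact that vanishing elements lift cleanly under the quotient map via inflation of characters. Specifically, if $\bar{\chi} \in \Irr(G/N)$ then its inflation $\chi \in \Irr(G)$ satisfies $\chi(g) = \bar{\chi}(gN)$ for every $g \in G$, so any preimage of a vanishing element of $G/N$ is a vanishing element of $G$. Moreover, $\ord(gN)$ divides $\ord(g)$.

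First I would fix two distinct vanishing conjugacy classes $v\bar{\mathcal{C}}_i$ and $v\bar{\mathcal{C}}_j$ of $G/N$ with $i \neq j$, and pick representatives $\bar{g}_i, \bar{g}_j$ together with preimages $g_i, g_j \in G$. The observation in the previous paragraph shows that both $g_i$ and $g_j$ are vanishing elements of $G$, so each lies in some vanishing conjugacy class $v\mathcal{C}_k$ and $v\mathcal{C}_\ell$ of $G$ respectively. Next I would verify $k \neq \ell$: if $g_i$ and $g_j$ were $G$-conjugate, then their images $\bar{g}_i$ and $\bar{g}_j$ would be $G/N$-conjugate, contradicting the assumption that $v\bar{\mathcal{C}}_i \neq v\bar{\mathcal{C}}_j$.

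Combining the divisibility $\ord(\bar{g}_i) \mid \ord(g_i)$ and $\ord(\bar{g}_j) \mid \ord(g_j)$, we obtain
\[
\gcd(\ord(v\bar{\mathcal{C}}_i), \ord(v\bar{\mathcal{C}}_j)) \;\Big|\; \gcd(\ord(v\mathcal{C}_k), \ord(v\mathcal{C}_\ell)).
\]
For part (a), the hypothesis ($\star$) forces the right-hand side to equal $1$, whence the left-hand side is $1$ as well. For part (b), the hypothesis ($\star\star$) bounds the right-hand side by $2$; since the left-hand side divides it, the left-hand side is at most $2$. Both statements follow.

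There is no substantive obstacle here: the entire argument rests on the inflation-of-characters correspondence between $\Irr(G/N)$ and the subset of $\Irr(G)$ trivial on $N$, plus the elementary fact about orders of quotient images. The only point requiring a brief sanity check is the distinctness of the two $G$-classes chosen in the lift, which is immediate from the fact that the natural map on conjugacy classes from $G$ to $G/N$ is surjective and compatible with conjugation.
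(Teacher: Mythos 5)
Your proof is correct and follows essentially the same route as the paper: the paper's one-line argument is exactly the ``standard observation'' you spell out, namely that inflation of characters shows every preimage of a vanishing element of $G/N$ is a vanishing element of $G$, combined with $\ord(gN)\mid\ord(g)$ and the fact that non-conjugate elements of $G/N$ have non-conjugate preimages. Your write-up just makes the divisibility and distinct-class checks explicit, which the paper leaves implicit.
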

\begin{proof}
The result follows by the standard observation that $ xN\in \Van(G/N) $ implies that $ xN\subseteq \Van(G) $. 
\end{proof}

\begin{lemma}\cite[Lemma 2]{Qia02}\label{QiaLemma2}
Let $ G $ be a finite solvable group. Suppose $ M, N $ are normal subgroups of $ G $.
\begin{itemize}
\item[(a)] If $ M\setminus N $ is a conjugacy class and $ \gcd(|M{:}N|, |N|)=1 $, then $ M $ is a Frobenius group with kernel $ N $ and prime order complement.
\item[(b)] If $ G\setminus N $ is a conjugacy class, then $ G $ is a Frobenius group with an abelian kernel and complement of order two.
\end{itemize} 
\end{lemma}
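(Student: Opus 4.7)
The plan for (a) is to exploit the single-class hypothesis in three stages: extract structure on $M/N$, split off a complement via Schur--Zassenhaus, and then force the Frobenius structure plus the primality of $|H|$ by using that all elements of $M\setminus N$ share a common order. Since $M\setminus N = x^G$, conjugation by $G$ acts transitively on the non-trivial cosets of $N$ in $M$, and hence transitively on $(M/N)\setminus\{1\}$. This forces every non-identity element of $M/N$ to have a common (necessarily prime) order $p$, so $M/N$ has exponent $p$; its centre is characteristic, $G$-invariant and non-trivial, so it must coincide with $M/N$, which is thus elementary abelian of order $p^n$. Coprimeness together with Schur--Zassenhaus now yields $M = N\rtimes H$ with $|H|=p^n$, and since $H\cap N=\{1\}$ every element of $H\setminus\{1\}$ has order $p$.

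For the Frobenius structure the crucial trick is a simple order comparison. If $h\in H\setminus\{1\}$ and $n\in C_N(h)$, then $n$ and $h$ commute and their orders are coprime, so $|nh|=|n|\cdot|h|=p\,|n|$. But $nh\in M\setminus N=x^G$ must have the same order as $h$, namely $p$, forcing $n=1$. Thus $C_N(h)=1$ for every non-identity $h\in H$, so $M$ is Frobenius with kernel $N$ and complement $H$. Finally $H$ is a Frobenius complement which is a $p$-group of exponent $p$, and the classification of Frobenius complements (cyclic for odd $p$; cyclic or generalised quaternion for $p=2$, the latter having exponent at least $4$) forces $|H|=p$. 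I expect the order-comparison trick to be the main substantive step, while Schur--Zassenhaus and the Frobenius complement classification will serve as black boxes.

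For (b) the approach is more direct. The divisibility $|x^G|=|G|-|N|$ dividing $|G|$ forces $|G{:}N|=2$: writing $|G|=k\,|x^G|$ gives $|G{:}N|=k/(k-1)$, which is an integer only when $k=2$. Choose an involution $t\in G\setminus N$; every element of $Nt$ is $G$-conjugate to $t$, hence an involution, so expanding $(nt)^2=1$ yields $tnt^{-1}=n^{-1}$ for all $n\in N$. This simultaneously shows that $N$ is abelian and that $t$ inverts it. Then $|C_G(t)|=|G|/|t^G|=|G|/|N|=2$, so no non-trivial element of $N$ centralises $t$; combined with the inversion, this excludes involutions from $N$, so $|N|$ is odd. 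Therefore $G=N\rtimes\langle t\rangle$ is a Frobenius group with abelian kernel $N$ of odd order and complement $\langle t\rangle$ of order $2$, as required.
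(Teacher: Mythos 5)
The paper does not prove this statement at all: it is quoted verbatim from Qian's Lemma~2 in \cite{Qia02}, so there is no internal proof to compare against. Judged on its own, your argument is correct and is essentially the standard one. In (a), the transitivity of $G$-conjugation on $(M/N)\setminus\{1\}$, the centre argument giving $M/N$ elementary abelian of exponent $p$, the Schur--Zassenhaus splitting $M=N\rtimes H$, the order-comparison trick showing $C_N(h)=1$ for $h\neq 1$, and the structure of Frobenius complements (cyclic or generalised quaternion Sylow subgroups) forcing $|H|=p$ all go through; note that solvability is never needed, so your proof is in fact slightly more general than the stated hypotheses. In (b), the index computation forcing $|G{:}N|=2$, the inversion $tnt^{-1}=n^{-1}$ making $N$ abelian, and $C_N(t)=1$ giving the Frobenius structure are all fine. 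One presentational slip: you ``choose an involution $t\in G\setminus N$'' before you have shown such an element exists; the justification is the centralizer count you state a line later, since for any $x\in G\setminus N$ one has $|C_G(x)|=|G|/|x^G|=2$ while $\langle x\rangle\leqslant C_G(x)$, so $\ord(x)=2$ --- reorder these two observations and the proof is complete. (Both parts, like the statement itself, implicitly assume $1<N$, resp.\ $N<M$, to speak of a genuine Frobenius group; this is inherited from the source and not a defect of your argument.)
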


For a positive integer $ m $, set $ \pi(m):=\{p\mid p $ divides $ m $, where $ p $ is prime$ \} $.

\begin{corollary}\cite[Corollary 2.6]{DPSS10a}\label{DPSS10aCorollary2.6}
Let $G$ be a finite group and let $K$ be a nilpotent normal subgroup of $G$. If $ K \cap \Van(G) \neq \emptyset $, then there exists $ g\in K\cap \Van(G)$ whose order is divisible by every prime in $ \pi(|K|) $.
\end{corollary}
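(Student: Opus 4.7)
Since $K$ is nilpotent, write $K = P_1 \times \cdots \times P_r$ as the internal direct product of its Sylow $p_i$-subgroups; each $P_i$ is characteristic in $K$ and hence normal in $G$. The goal is to produce an element of $K \cap \Van(G)$ whose projection to every $P_i$ is nontrivial, equivalently, whose order is divisible by every $p_i \in \pi(|K|)$.

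I would proceed by induction on $r = |\pi(|K|)|$, the base case $r = 1$ being immediate since any $g \in K \cap \Van(G)$ is nontrivial and has $p_1$-power order. For the inductive step, fix a prime $p = p_1 \in \pi(|K|)$ and set $H = P_2 \times \cdots \times P_r$, the Hall $p'$-subgroup of $K$; this is nilpotent and normal in $G$. Given $g \in K \cap \Van(G)$ witnessed by $\chi \in \Irr(G)$, apply Clifford's theorem to decompose $\chi|_K = e \sum_{j=1}^t \theta^{(j)}$, a sum over a single $G$-orbit in $\Irr(K)$. Using the direct product decomposition $K = P \times H$, each summand factors as $\theta^{(j)} = \theta^{(j)}_P \cdot \theta^{(j)}_H$ with $\theta^{(j)}_P \in \Irr(P)$ and $\theta^{(j)}_H \in \Irr(H)$.

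Next, split into cases according to whether $H \cap \Van(G)$ is empty. If it is, every vanishing element of $K$ already has nontrivial $P$-part and the argument reduces to a parallel problem on $H$ with the roles interchanged. If instead $H \cap \Van(G) \neq \emptyset$, the induction hypothesis applied to $H$ yields a fully supported vanishing element $v \in H$, and the remaining task is to exhibit a nontrivial $y \in P$ such that $yv \in \Van(G)$. The natural thing to study is the function $y \mapsto \chi(yv) = e \sum_j \theta^{(j)}_P(y)\theta^{(j)}_H(v)$, which is a class function on $P$ given explicitly as a linear combination of the characters $\theta^{(j)}_P$; the plan is to force its vanishing at some $y \neq 1$ either directly from $\chi(v) = 0$ (which forces a linear relation among the coefficients) or, failing that, by passing to a different irreducible character of $G$ that vanishes at an appropriate $yv$.

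The principal obstacle is precisely this lifting step: for a single $\chi$, the function $y \mapsto \chi(yv)$ may vanish only at the identity of $P$, so the argument must allow the witnessing character to change. A plausible route is a Burnside-type row-vanishing argument applied to an abelian subgroup of $K$ containing both the inductively produced element $v$ and a target element of $P$, combined with careful bookkeeping to ensure that the modifications made to handle successive primes do not cancel the nontriviality already achieved at the earlier coordinates. Coordinating these adjustments across all $r$ Sylow factors is where the bulk of the technical work would lie.
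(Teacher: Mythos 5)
Your write-up is a plan, not a proof: it stalls exactly where the content of the statement lies, and you say so yourself (``the principal obstacle is precisely this lifting step'', ``a plausible route'', ``where the bulk of the technical work would lie''). Nothing in the proposal establishes the existence of a nontrivial $y\in P$ with $yv\in \Van(G)$. The direct attempt cannot work as sketched: since $y$ and $v$ are commuting elements of coprime order, the standard congruence gives $\chi(yv)\equiv\chi(v)\equiv 0 \pmod{\mathfrak{p}}$ for every $y\in P$, where $\mathfrak{p}$ is a maximal ideal of the algebraic integers containing $p$; this does not force an actual zero on the coset $vP$. Nor can any ``Burnside-type row-vanishing'' or orthogonality bookkeeping carried out inside $P$ alone succeed, because a class function on $P$ with algebraic-integer coefficients may vanish only at the identity (for instance $\rho_P-|P|\cdot 1_P$, with $\rho_P$ the regular character), so the hypothesis $\chi(v)=0$ plus generic class-function manipulations is compatible with $y\mapsto\chi(yv)$ having no further zeros. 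You acknowledge that the witnessing character must be allowed to change, but you provide no mechanism for choosing a new character while retaining control of the $p'$-part of the element --- and that mechanism is the whole theorem. (Note also that this paper does not prove the statement at all; it quotes it from \cite[Corollary 2.6]{DPSS10a}, where the work is done by dedicated character-theoretic lemmas established before the corollary, exploiting that each Sylow subgroup of $K$ is normal in $G$, not just the direct factorization of $\Irr(K)$.)

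The case division is also not sound as stated. If $H\cap\Van(G)=\emptyset$ you cannot invoke the inductive hypothesis for $H$ (there is no vanishing element in $H$ to improve), and ``reduces to a parallel problem on $H$ with the roles interchanged'' is not a reduction: it is the same unresolved lifting problem with a different prime singled out, and cycling through the primes in this way need not terminate. A workable skeleton would instead fix one extremal datum --- say $g\in K\cap\Van(G)$ with $\pi(\ord(g))$ of maximal size, witnessed by some $\chi$ --- and then prove a lemma producing an element of $K\cap\Van(G)$ whose order is divisible by $\ord(g)$ and by a further prime $p\in\pi(|K|)\setminus\pi(\ord(g))$, using Clifford theory with respect to the normal subgroup $P\in\Syl_p(K)$ of $G$. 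Until such a lemma is stated and proved, the proposal has a genuine gap at its central step.
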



\begin{lemma} \cite[Theorem D]{INW99}\label{INW99TheoremD} Let G be a finite solvable group. If $x$ is a non-vanishing element of $G$, then $ x\mathbf{F}(G) $ is a $2$-element of $ G/\mathbf{F}(G) $. If $ G $ is not nilpotent, then $ x $ lies in the penultimate term of the Fitting series.
\end{lemma}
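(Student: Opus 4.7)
My plan is to induct on $|G|$ and prove the contrapositive of the first claim: if $x\mathbf{F}(G)$ has non-trivial odd-order part, then some irreducible character of $G$ vanishes at $x$. The base case is $G$ nilpotent, where $\mathbf{F}(G)=G$ and the claim is vacuous.

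For the inductive step, I would choose a minimal normal subgroup $N$ of $G$; by solvability $N$ is an elementary abelian $p$-group and $N\leqslant\mathbf{F}(G)$. Since every $\overline{\chi}\in\Irr(G/N)$ lifts to a $\chi\in\Irr(G)$ with $\chi(x)=\overline{\chi}(xN)$, the coset $xN$ is non-vanishing in $G/N$; by induction, $xN\cdot\mathbf{F}(G/N)$ is a $2$-element of $(G/N)/\mathbf{F}(G/N)$, and $xN$ lies in the penultimate term of the Fitting series of $G/N$ whenever that quotient is non-nilpotent. Because $\mathbf{F}(G)/N\leqslant\mathbf{F}(G/N)$, the natural surjection $G/\mathbf{F}(G)\twoheadrightarrow(G/N)/\mathbf{F}(G/N)$ lets the conclusion transfer whenever $\mathbf{F}(G/N)=\mathbf{F}(G)/N$. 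The true obstruction arises in the residual case where $\mathbf{F}(G/N)$ is strictly larger than $\mathbf{F}(G)/N$, i.e.\ when passage to $G/N$ artificially increases the Fitting subgroup.

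In that remaining case one must exhibit a vanishing character at $x$ directly, which is the technical heart of the proof. The main tool is Clifford theory combined with coprime-action techniques: since $\Centralizer_G(\mathbf{F}(G))\leqslant\mathbf{F}(G)$ in solvable groups, a coset $x\mathbf{F}(G)$ with non-trivial odd part cannot centralize $\mathbf{F}(G)$, so it acts non-trivially on some abelian characteristic layer $A\leqslant\mathbf{F}(G)$ (for instance $\Center(\mathbf{F}(G))$ or a $\Omega_1$-subgroup of a Sylow subgroup). I would pick a linear $\lambda\in\Irr(A)$ whose $G$-orbit contains no $x$-fixed point, set $T=I_G(\lambda)$, and take an irreducible $\psi\in\Irr(T\mid\lambda)$; the standard formula expressing $\psi^G(x)$ as a sum over $\langle x\rangle$--$T$ double-coset representatives will vanish because no $G$-conjugate of $x$ can lie in $T$. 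The main obstacle is producing such a $\lambda$: this requires converting non-trivial action of $x$ on $A$ into non-trivial action on $\Irr(A)$ via the Brauer permutation lemma (or the Glauberman--Isaacs correspondence, where coprimeness is available) and then an orbit-counting estimate ensuring that at least one $G$-orbit on $\Irr(A)$ avoids all $x$-fixed characters. The penultimate-term statement follows by iterating the same argument one step up in the Fitting series, replacing $\mathbf{F}(G)$ by $\mathbf{F}_{n-1}(G)$ and working in $G/\mathbf{F}_{n-1}(G)$.
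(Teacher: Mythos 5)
The paper offers no proof of this lemma: it is quoted directly from \cite[Theorem D]{INW99}, so your proposal can only be measured against the argument in that source. Your skeleton for the first claim is the right one in spirit (pass to a quotient, find a linear $\lambda$ of an abelian normal layer whose $G$-orbit admits no fixed point of $x$, and observe that every $\chi\in\Irr(G)$ lying over $\lambda$ is induced from $I_G(\lambda)$ and hence vanishes at $x$). But the step you describe as ``an orbit-counting estimate ensuring that at least one $G$-orbit on $\Irr(A)$ avoids all $x$-fixed characters'' is not a routine estimate: it is exactly the main theorem of \cite{INW99} (their Theorem B), which says that if a solvable group $G$ acts faithfully and completely reducibly on a finite module $V$ and $x\in G$ has \emph{odd} order, then some $G$-orbit on $V$ contains no $x$-fixed point. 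Non-trivial action of $x$ on $\Irr(A)$ does not force such an orbit to exist; for $2$-elements it can genuinely fail (which is precisely why only ``$2$-element'' survives in the conclusion), and even for odd-order elements the proof is a delicate induction through imprimitivity and quasi-primitive configurations, not Brauer's permutation lemma plus counting. There is also a set-up issue: to apply such a theorem one passes to $G/\Phi(G)$, where $\mathbf{F}(G)/\Phi(G)$ is a faithful completely reducible $G/\mathbf{F}(G)$-module (using $\Centralizer_G(\mathbf{F}(G)/\Phi(G))=\mathbf{F}(G)$); choosing an ad hoc layer such as $\Center(\mathbf{F}(G))$ need not be one on which the odd part of $x$ acts non-trivially, and your reduction by an arbitrary minimal normal subgroup leaves the ``residual case'' (e.g.\ $\Phi(G)=1$) as the generic case, so the induction does not actually reduce the problem.

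The second claim is also not obtained by ``iterating the same argument one step up.'' Applying the first statement to $G/\mathbf{F}_{n-2}(G)$ only shows that the image of $x$ in $G/\mathbf{F}_{n-1}(G)$ is a $2$-element; to conclude $x\in\mathbf{F}_{n-1}(G)$ one must additionally eliminate a non-trivial $2$-part, and the fixed-point-free-orbit machinery is unavailable at the prime $2$. In \cite{INW99} this requires a separate argument specific to $2$-elements, so as written your proposal has a genuine gap for both assertions of the lemma.
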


A non-linear irreducible character $ \chi $ of G is said to be of $ p $-defect zero if $ p $ does not divide $ |G|/\chi(1) $. By a result of Brauer (see \cite[Theorem 8.17]{Isa06}), if $ \chi $ is an
irreducible character of $ p $-defect zero of $ G $, then $ \chi(g) = 0 $ whenever $ p $ divides the order of $ g $ in $ G $. The existence of $ p $-defect zero characters is guaranteed in finite simple groups $ G $ for almost all primes $ p\geqslant 5 $ dividing $ |G| $ as the following result shows:

\begin{lemma}\cite[Corollary 2.2]{GO96}\label{pdefectzero5ormore} Let $ G $ be a non-abelian finite simple group and $ p $ be a prime. If $ G $ is a finite group of Lie type, or if $ p\geqslant 5 $, then there exists $ \chi \in \Irr(G) $ of $ p $-defect zero. 
\end{lemma}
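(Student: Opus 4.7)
The plan is to invoke the classification of finite simple groups and treat each family separately. The sporadic simple groups are handled by a finite inspection of the \Atlas: for each such $G$ and each prime $p \geqslant 5$ dividing $|G|$ one exhibits an explicit $\chi \in \Irr(G)$ with $p \nmid |G|/\chi(1)$.

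For the alternating groups I would first work with $\SSS_n$. Its irreducible characters are indexed by partitions $\lambda \perps n$, and the hook-length formula shows that $\chi_\lambda$ has $p$-defect zero precisely when $\lambda$ is a $p$-core, i.e.\ no hook length of $\lambda$ is divisible by $p$. The problem therefore reduces to producing, for every $n \geqslant 1$ and every prime $p \geqslant 5$, at least one $p$-core partition of $n$; this is the main theorem of Granville and Ono, proved via generating functions and modular forms. Given such a $\lambda$, one descends to $\A_n$ routinely: if $\lambda \neq \lambda'$ then $\chi_\lambda$ restricts irreducibly to $\A_n$ with unchanged degree, and if $\lambda = \lambda'$ it splits into two constituents of degree $\chi_\lambda(1)/2$; since $p \geqslant 5$ is odd and $|\A_n|_p = |\SSS_n|_p$, the $p$-defect-zero property is preserved in either case.

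For groups of Lie type in defining characteristic $r$ I would split according as $p = r$ or $p \neq r$. When $p = r$, the Steinberg character has degree $|G|_r$ and is automatically of $r$-defect zero. When $p \neq r$, I would invoke Lusztig's classification to produce a semisimple character $\chi_s$ attached to a semisimple element $s$ of the dual group whose centralizer is of order coprime to $p$; such an $s$ is supplied by standard facts about regular elements of maximal tori of order divisible by a suitable cyclotomic factor $\Phi_d(q)$. The construction is uniform over the primes dividing $|G|$, which is why no restriction on $p$ is needed in this family.

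The hard part is the Granville--Ono input on the existence of $p$-cores: it is a genuinely analytic statement, and without the hypothesis $p \geqslant 5$ the alternating case actually fails (for instance, $2$-cores of $n$ exist only when $n$ is a triangular number, so $\A_n$ has no $2$-defect-zero character for most $n$). All other ingredients --- the \Atlas check for sporadics, and the Steinberg and semisimple character constructions for Lie type groups --- are either finite case-checks or structural Lie-theoretic arguments, which explains why the restriction on $p$ only bites in the alternating family.
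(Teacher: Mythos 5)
This lemma is not proved in the paper at all: it is quoted verbatim from Granville and Ono \cite[Corollary 2.2]{GO96}, and your sketch is essentially a correct reconstruction of how that cited result is obtained (the modular-forms/generating-function existence of $p$-core partitions for the alternating groups, the Steinberg character in defining characteristic and semisimple characters with $p'$-centralizer in non-defining characteristic for Lie type, and an \Atlas{}-style check for the sporadic groups). So your approach matches the source the paper relies on; the only caveat worth recording is that the non-defining-characteristic Lie type case is itself quoted by Granville--Ono from earlier work of Michler and Willems rather than proved anew.
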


\begin{lemma}\cite[Lemma 2.2]{Bro16}\label{Bro16Lemma2.2} Let $ G $ be a finite group, $ N $ a normal subgroup of $ G $ and $ p $ be a prime. If $ N $ has an irreducible character of $ p $-defect zero, then every element of $ N $ of order divisible by $ p $ is a vanishing element in $ G $.
\end{lemma}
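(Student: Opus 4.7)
The plan is to combine Brauer's theorem on $p$-defect zero characters (already recalled in the paragraph preceding Lemma \ref{pdefectzero5ormore}) with Clifford's theorem applied to the normal subgroup $N\trianglelefteq G$. Let $\chi\in\Irr(N)$ be the given character of $p$-defect zero, and let $\chi=\chi_1,\chi_2,\dots,\chi_t$ be the full $G$-orbit of $\chi$ in $\Irr(N)$ under conjugation. Since conjugation by elements of $G$ sends irreducible characters of $N$ to irreducible characters of $N$ of the same degree, each $\chi_i$ has the same degree as $\chi$, hence each $\chi_i$ is also of $p$-defect zero. Consequently, by Brauer's theorem, $\chi_i(x)=0$ for every $x\in N$ with $p\mid\ord(x)$ and every $i\in\{1,\dots,t\}$.

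Next I would pick any irreducible constituent $\psi\in\Irr(G)$ of the induced character $\chi^{G}$; such a constituent exists because $\chi^{G}$ is a nonzero character of $G$. By Frobenius reciprocity, $\chi$ appears as a constituent of $\psi_N$, so Clifford's theorem yields
\[
\psi_N \;=\; e\bigl(\chi_1+\chi_2+\cdots+\chi_t\bigr)
\]
for some positive integer $e$. Evaluating this identity at any fixed $x\in N$ with $p\mid\ord(x)$ gives $\psi(x)=e\sum_{i=1}^{t}\chi_i(x)=0$, so that $x\in\Van(G)$, as required.

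There is essentially no obstacle here: the two ingredients are Brauer's theorem (which supplies the vanishing on each $\chi_i$) and Clifford's theorem (which packages the $G$-orbit of $\chi$ into the restriction of a single $\psi\in\Irr(G)$). The only point worth stressing in the write-up is that the $p$-defect zero property is preserved under $G$-conjugation, so that Brauer's vanishing applies uniformly to every summand appearing in $\psi_N$.
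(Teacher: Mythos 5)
Your argument is correct: conjugate characters have the same degree, so the whole $G$-orbit of the defect-zero character consists of defect-zero characters, Brauer's theorem kills each of them on every $p$-singular element of $N$, and Clifford's theorem transfers this vanishing to any irreducible constituent $\psi$ of $\chi^{G}$. The paper itself gives no proof (the lemma is simply quoted from Brough's article), but your proof is essentially the standard one used there, so nothing further is needed.
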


\begin{lemma}\cite[Lemma 5]{BCLP07}\label{BCLP07Lemma5} Let $ G $ be a finite group, and $ N=S_{1}\times \cdots \times S_{k} $ a minimal normal subgroup of $ G $, where every $ S_{i} $ is isomorphic to a non-abelian simple group $ S $. If $ \theta \in \Irr(S) $ extends to $ \Aut(S) $, then $ \varphi= \theta \times \cdots \times \theta \in \Irr(N) $ extends to $ G $.
\end{lemma}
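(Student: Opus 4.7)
The plan is to extend $\varphi$ all the way up to $\Aut(N)$ and then pull this extension back to $G$. Since every $S_i$ is non-abelian simple, $\Center(N)=1$, so $N\cap \Centralizer_G(N)=1$, and the standard identification $\Aut(N)\cong \Aut(S)\wr \Sym_k$ applies, with the top group permuting the simple direct factors of $N$. Writing $C=\Centralizer_G(N)$, the conjugation action embeds $G/C$ into $\Aut(N)$, and $N$ sits inside $G/C$ as the isomorphic image $NC/C$. Thus once $\varphi$ is extended to $\Aut(N)$, restricting to the image of $G/C$ and then inflating along the quotient map $G\to G/C$ will produce a character $\psi$ of $G$ with $\psi(n)=\varphi(n)$ for every $n\in N$, which is exactly what is required.

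The key step is to produce an extension $\Phi$ of $\varphi$ to the wreath product $\Aut(S)\wr\Sym_k$. Let $V$ be a representation of $\Aut(S)$ affording the given extension $\tilde\theta$ of $\theta$. On $V^{\otimes k}$, I would let the base group $\Aut(S)^k$ act componentwise via $\tilde\theta$ (the $i$th coordinate acting on the $i$th tensor slot) and let $\sigma\in \Sym_k$ act by permuting tensor factors. A direct verification that the permutation operators intertwine the base action via $\sigma(a_1,\dots,a_k)\sigma^{-1}=(a_{\sigma^{-1}(1)},\dots,a_{\sigma^{-1}(k)})$ shows that this defines a representation of the wreath product. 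Its character $\Phi$ restricts on the base group to $\tilde\theta\times\cdots\times\tilde\theta$, and therefore on $N$ (embedded in $\Aut(S)^k$ via inner automorphisms) to $\theta\times\cdots\times\theta=\varphi$.

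The main obstacle is this wreath-product step: one must check that the tensor-product rule genuinely assembles into a representation of the whole wreath product, rather than yielding only a $\Sym_k$-invariant character of the base. This hinges on having $\tilde\theta$ defined uniformly on all of $\Aut(S)$ and not just on the $G$-stabiliser of a single factor; that uniformity is precisely what allows the permutation operators and the base-group operators to satisfy the wreath relations on the nose. With $\Phi$ in hand, the descent to $G/C$ by restriction and the inflation along $G\to G/C$ complete the argument.
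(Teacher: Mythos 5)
This lemma is quoted in the paper directly from \cite[Lemma 5]{BCLP07} without proof, so there is no in-paper argument to compare yours against; judged on its own, your proof is correct and is in substance the standard argument behind the cited result. Since $\Center(N)=1$ you get $N\cap\Centralizer_G(N)=1$ and an embedding $G/\Centralizer_G(N)\hookrightarrow\Aut(N)\cong\Aut(S)\wr\Sym_k$, and the tensor-power construction on $V^{\otimes k}$ does assemble into a representation of the full wreath product precisely because $\tilde\theta$ is defined on all of $\Aut(S)$; restricting to the image of $G$ and inflating along $G\to G/\Centralizer_G(N)$ yields $\psi$ with $\psi|_N=\varphi$. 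Two small points you leave tacit but which are immediate: $\psi$ is irreducible because its restriction $\varphi$ to $N$ already is, and the identification $\Aut(N)\cong\Aut(S)\wr\Sym_k$ rests on the fact that the $S_i$ are exactly the simple direct factors (the minimal normal subgroups) of $N$, so every automorphism permutes them --- note that the minimality of $N$ in $G$ is never actually used.
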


\begin{lemma}\cite[Theorem 1.1]{MT-V11}\label{MT-V11Theorem1.1}
Suppose that $ N $ is a minimal normal non-abelian subgroup of a finite group $ G $. Then there exists an irreducible character $ \theta $ of $ N $ such that $ \theta $ is extendible to $ G $ with $ \theta(1)\geqslant 5 $.
\end{lemma}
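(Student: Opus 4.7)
The plan is to reduce the statement to a claim about a single non-abelian simple group and then invoke the classification of finite simple groups. Since $ N $ is a minimal normal non-abelian subgroup of $ G $, one may write $ N=S_{1}\times \cdots \times S_{k} $ with each $ S_{i} $ isomorphic to a fixed non-abelian simple group $ S $, and $ G $ permutes the factors $ S_{i} $ transitively by conjugation. By Lemma~\ref{BCLP07Lemma5}, if $ \psi\in \Irr(S) $ extends to $ \Aut(S) $, then $ \theta=\psi\times \cdots \times \psi\in \Irr(N) $ extends to $ G $ and satisfies $ \theta(1)=\psi(1)^{k} $. Consequently, it suffices to produce, for every non-abelian simple group $ S $, an irreducible character $ \psi $ of $ S $ of degree at least $ 5 $ that extends to $ \Aut(S) $.

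Using the classification of finite simple groups, I would verify this family by family. For alternating groups $ A_{n} $ with $ n\geqslant 7 $, the natural $ (n-1) $-dimensional constituent of the permutation character of $ S_{n} $ restricts irreducibly to $ A_{n} $ and has degree at least $ 6 $, so it extends to $ S_{n}=\Aut(A_{n}) $; for $ A_{5} $ and $ A_{6} $ the character tables of $ S_{5} $ and $ \Aut(A_{6}) $ exhibit extendible characters of degree $ 5 $ and $ 10 $, respectively. For groups of Lie type, the Steinberg character of $ S $ has degree equal to the $ p $-part of $ |S| $ (with $ p $ the defining characteristic), is rational valued, and extends to the full automorphism group; its degree is at least $ 5 $ except for a short list of small cases such as $ L_{2}(4)\cong L_{2}(5)\cong A_{5} $, $ L_{2}(7) $, $ L_{2}(8) $ and $ L_{2}(9)\cong A_{6} $, each of which is handled individually from its character table. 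For the sporadic groups, since $ |\Out(S)|\leqslant 2 $ in every case, inspection of the \Atlas{} produces in each case an irreducible character of degree at least $ 5 $ that is stable under the outer automorphism and whose cohomological obstruction to extension (a class in $ H^{2}(\Out(S),\mathbb{C}^{\times}) $) vanishes, hence extends to $ \Aut(S) $.

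The main obstacle will be carrying out the extendibility verification uniformly across groups of Lie type, particularly in the presence of diagonal, field and graph automorphisms (for instance in $ D_{n}(q) $, $ E_{6}(q) $ and the twisted types), where one must check both that the chosen character lies in an outer-stable orbit and that the associated cohomological obstruction to extension vanishes. The Steinberg character is tailor-made for this purpose since it is fixed by every automorphism and its character field is $ \mathbb{Q} $, so both obstructions are automatic; what remains is the finite check for sporadic groups and the small Lie type anomalies. Combining these verifications with the reduction via Lemma~\ref{BCLP07Lemma5} then yields the desired character $ \theta $.
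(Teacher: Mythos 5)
This lemma is not proved in the paper at all: it is quoted verbatim from Magaard and Tong-Viet \cite[Theorem 1.1]{MT-V11}, so there is no internal proof to compare with. Your strategy (reduce to a single simple factor via Lemma \ref{BCLP07Lemma5}, then use CFSG to exhibit, for each simple $S$, a character of degree at least $5$ extending to $\Aut(S)$ -- Steinberg for Lie type, the natural character for alternating groups, \Atlas{} inspection for sporadics) is in substance the argument of the cited source; note that the paper's bibliography even carries Feit's article \cite{Fei93} on extending Steinberg characters, which is the key input for the Lie-type step.

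However, one concrete step in your case analysis is false: the degree-$10$ character of $\mathrm{A}_6$ does \emph{not} extend to $\Aut(\mathrm{A}_6)\cong \mathrm{P\Gamma L}_2(9)$. It is $\Aut(\mathrm{A}_6)$-invariant and extends to each of the three index-two subgroups $\mathrm{S}_6$, $\PGL_2(9)$ and $\mathrm{M}_{10}$, but the cohomological obstruction over the Klein four quotient is nontrivial; this is the standard example of an invariant character of a normal subgroup that fails to extend. Indeed $\mathrm{P\Gamma L}_2(9)$ has $13$ conjugacy classes and character degrees $1,1,1,1,9,9,9,9,10,10,16,16,20$: the two degree-$10$ characters lie over the fused pair of degree-$5$ characters of $\mathrm{A}_6$, and the unique character over the degree-$10$ character of $\mathrm{A}_6$ has degree $20$. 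So your appeal to a ``vanishing obstruction'' cannot be waved through when $\Out(S)$ is non-cyclic, and $\mathrm{A}_6$ is precisely the case where it bites. The gap is local and easily repaired: for $\mathrm{A}_6\cong \PSL_2(9)$ take instead the Steinberg character of degree $9$, which is rational, $\Aut$-invariant and extends to $\Aut(\PSL_2(9))$ by Feit's theorem -- i.e.\ your own Lie-type argument already covers this group, and $\PSL_2(9)$ should simply be removed from the list of ``anomalies'' (of the groups you list, only $\PSL_2(4)\cong \mathrm{A}_5$ has Steinberg degree below $5$). With that correction the proposal is sound.
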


%


The number theory result below follows easily.
\begin{lemma}\label{outerlessthanconjugacyclasses}
Let $ p $ be and $ f $ be a positive integer. If $ q=p^{f}\geqslant 32 $, then $ f < (q-2)/2 $.
\end{lemma}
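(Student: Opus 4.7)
The statement is (modulo an obvious typo in ``Let $p$ be and $f$'') that for a prime $p$ and a positive integer $f$ with $q=p^f\geqslant 32$, one has $f<(q-2)/2$, i.e.\ $2f+2<p^f$. My plan is a short case split on $f$, after which an induction (or a direct exponential-vs-linear comparison) handles the tail $f\geqslant 5$.

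First I would reformulate the claim as $p^f\geqslant 2f+3$. For each small value of $f$, the hypothesis $p^f\geqslant 32$ forces $p$ to be at least a specific size, and I would simply record the resulting lower bound on $q$ and compare with $2f+2$:
\begin{itemize}
\item[] $f=1$: $q=p\geqslant 32$ forces $p\geqslant 37$, so $q\geqslant 37>4$.
\item[] $f=2$: since $5^2=25<32$, one must have $p\geqslant 7$, so $q\geqslant 49>6$.
\item[] $f=3$: since $3^3=27<32$, one must have $p\geqslant 5$, so $q\geqslant 125>8$.
\item[] $f=4$: since $2^4=16<32$, one must have $p\geqslant 3$, so $q\geqslant 81>10$.
\end{itemize}

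For $f\geqslant 5$ every prime $p\geqslant 2$ already gives $p^f\geqslant 32$, so the argument must be genuinely exponential rather than numerical. Here I would use $q\geqslant 2^f$ and prove $2^f>2f+2$ by induction on $f\geqslant 5$: the base case $f=5$ gives $32>12$, and the inductive step is immediate since $2^{f+1}=2\cdot 2^f>2(2f+2)=4f+4>2(f+1)+2$. Combining the five items above yields the lemma.

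The only ``main obstacle'' is really just bookkeeping at the boundary $f=5$, $p=2$, $q=32$, where the inequality $5<15$ holds with room to spare; all other cases are handled with considerable slack. I would keep the write-up to just a few lines, grouping the cases $f\leqslant 4$ into a single table-style enumeration and then invoking the easy exponential induction for $f\geqslant 5$.
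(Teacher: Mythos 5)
Your proof is correct. Note that the paper itself offers no argument here: the lemma is simply asserted with the remark that it ``follows easily,'' so your write-up is in effect supplying the missing verification rather than diverging from an existing one. Two small comments on economy. First, in the cases $f\leqslant 4$ you do not need primality of $p$ at all: since $2f+2\leqslant 10<32\leqslant q$, the hypothesis $q\geqslant 32$ already gives $q>2f+2$ directly, so the intermediate bounds $p\geqslant 37,7,5,3$ are harmless but superfluous (indeed the same trivial comparison disposes of every $f\leqslant 14$ at once). Second, your inductive step for $f\geqslant 5$ is sound: $2^{f+1}>2(2f+2)=4f+4\geqslant 2(f+1)+2$, and combined with $q=p^f\geqslant 2^f$ this closes the argument. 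So the structure ``finitely many small $f$ by direct comparison with $32$, then an exponential-versus-linear induction using $q\geqslant 2^f$'' is exactly the kind of elementary number-theoretic check the authors had in mind, and your version is complete and correct as written.
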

We end this section by stating a result on groups in which every irreducible character vanishes on at most two conjugacy classes.

\begin{theorem}\cite[Theorem 1]{BCG00}\label{BCG00Theorem1}
Let $ G $ be a non-abelian finite group in which every irreducible character vanishes on at most two conjugacy classes. Then one of the following holds:
\begin{itemize}
\item[(a)] $ G\cong \mathrm{A}_{5} $ or $ G\cong \PSL_{2}(7) $;
\item[(b)] $ G $ is solvable and one of the following holds:
\begin{itemize}
\item[(i)] $ G $ has a subgroup $ Z $ with $ |Z|\leqslant 2 $ such that $ G/Z $ is Frobenius group with a Frobenius complement of order $ 2 $ and an abelian Frobenius kernel of odd order.
\item[(ii)] $ G/Z=FA $ is a semidirect product, where $ |A|\leqslant 2 $, $ |Z|\leqslant 2 $ and $ F $ is a Frobenius group with a Frobenius complement of order $ 3 $ and a nilpotent Frobenius kernel of class at most $ 2 $.
\end{itemize}
\end{itemize}
\end{theorem}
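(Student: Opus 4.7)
The plan is to separate the argument into the non-solvable and solvable cases. In the non-solvable case, by Lemma~\ref{G/Nsatifies} we may quotient by the solvable radical and assume $G$ has a non-abelian minimal normal subgroup $N = S_1 \times \cdots \times S_k$ with each $S_i \cong S$ a non-abelian simple group. I would first rule out $k \geqslant 2$: Lemmas~\ref{BCLP07Lemma5} and \ref{MT-V11Theorem1.1} supply some $\theta \in \Irr(S)$ extendible to $\Aut(S)$ with $\theta(1) \geqslant 5$, and the ``diagonal'' $\theta \times \cdots \times \theta$ extends to an irreducible character of $G$; evaluating this extension on tuples of the form $(s,1,\ldots,1)$, $(1,s,1,\ldots,1)$, \ldots, for a vanishing element $s \in S$ (which exists by Burnside's theorem), produces at least three distinct vanishing classes of $G$, contradicting the hypothesis. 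Hence $N = S$ is simple.

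Next, Lemma~\ref{pdefectzero5ormore} provides, for each prime $p \geqslant 5$ dividing $|S|$ (and for every prime if $S$ is of Lie type), an irreducible character of $p$-defect zero; once such a character extends to $G$, Lemma~\ref{Bro16Lemma2.2} promotes every $p$-singular element of $S$ to a vanishing element of $G$. The restriction of at most two vanishing classes then constrains the order spectrum of $S$ severely. A case-by-case check through the alternating groups (comparing cycle types that are $p$-singular for distinct small primes), sporadic groups (via \Atlas\ data), and Lie type families (where Lemma~\ref{outerlessthanconjugacyclasses} shows that $|\Out(S)|$ is too small to fuse the many $S$-classes into at most two $G$-classes) leaves only $\A_5$ and $\PSL_2(7)$ as possibilities. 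A short final check rules out proper extensions of either by outer automorphisms, so $G = S$.

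For the solvable case, I would induct on $|G|$ using Lemma~\ref{G/Nsatifies}. If some non-linear $\chi \in \Irr(G)$ has $\kernel\chi \neq 1$, the induction applies to $G/\kernel\chi$. Otherwise, applied to a suitable chief factor $M/N$, Lemma~\ref{QiaLemma2} forces $M$ (or $G$ itself) to be a Frobenius group with abelian kernel and complement of order $2$, producing case (b)(i) once a central factor $Z$ of order at most $2$ is pulled back. When a faithful non-linear character vanishes on exactly two classes in a pattern incompatible with complement order $2$, the Frobenius complement is forced instead to have order $3$, with an additional factor $A$ of order at most $2$ appearing from the outer structure; this yields case (b)(ii). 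Lemma~\ref{INW99TheoremD} keeps the Fitting height bounded so that the induction terminates in one of the two described configurations.

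The main obstacle is the fine analysis in the solvable case, specifically showing that only Frobenius complements of order $2$ or $3$ can occur and that the central factor $Z$ has order at most $2$. One must carefully balance the row-sum count (each irreducible character contributes at most two zero columns, so the total number of zero entries is at most $2\,k(G)$) against column constraints coming from Frobenius reciprocity applied to characters induced from a candidate complement, ruling out any configuration in which a Frobenius complement of order $\geqslant 4$, or a central factor $Z$ with $|Z| > 2$, would compel some irreducible character to acquire a third vanishing class.
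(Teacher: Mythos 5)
You should first note that the paper does not prove this statement at all: Theorem~\ref{BCG00Theorem1} is quoted verbatim from \cite[Theorem 1]{BCG00} and used as a black box, so the only question is whether your sketch would stand on its own; as written it has genuine gaps. In the non-solvable case, passing to the quotient by the solvable radical cannot by itself deliver conclusion (a), which asserts that $G$ \emph{itself} is $\mathrm{A}_5$ or $\PSL_2(7)$: after identifying the quotient you must still exclude extensions with non-trivial solvable radical, e.g.\ $\SL_2(5)$ (whose degree-$6$ character vanishes on three classes) and $C_2\times \mathrm{A}_5$ (where $\lambda\times\theta$ with $\theta$ of degree $5$ vanishes on four classes); no such step appears. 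In the $k\geqslant 2$ step your witnesses $(s,1,\ldots,1)$, $(1,s,1,\ldots,1),\ldots$ are typically fused by the permutation action of $G$ on the simple factors, so they need not lie in distinct $G$-classes; one must instead use elements with different coordinate multisets, such as $(s,1,\ldots,1)$, $(s,s,1,\ldots,1)$, $(s,u,1,\ldots,1)$. Also, the hypothesis is per-character, not that $G$ has at most two vanishing classes globally; the defect-zero argument survives only because, by Clifford theory, a single $\chi\in\Irr(G)$ lying over a $p$-defect zero character of $N$ vanishes on all $p$-singular elements of $N$ (no extension of the defect-zero character to $G$ is needed, nor is one generally available), and your write-up conflates these two properties.

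The more serious gap is the solvable case, which is the real content of the theorem and is essentially unargued. Lemma~\ref{QiaLemma2} requires a normal section $M\setminus N$ that is a \emph{single} conjugacy class (plus a coprimality condition), and nothing in your sketch extracts such a configuration from the hypothesis that each irreducible character vanishes on at most two classes. The claims that only Frobenius complements of order $2$ or $3$ can occur, that $|Z|\leqslant 2$, and that an extra factor $A$ with $|A|\leqslant 2$ appears ``from the outer structure'' are precisely the conclusions to be proved, and they are asserted rather than derived; likewise Lemma~\ref{INW99TheoremD} says nothing that bounds the Fitting height in the way your induction needs. The closing row/column count of zero entries is a plausible heuristic, but as stated it does not rule out a complement of order $\geqslant 4$ or $|Z|>2$, since different characters may spread their (at most two) zero columns over many classes. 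In short, the non-solvable half is repairable with additional checks, but the solvable half would have to be rebuilt essentially from scratch, which is why the paper simply cites \cite{BCG00} rather than proving the result.
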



%

\section{Theorem A}

\begin{proof}[\textbf{Proof of Theorem A}] We prove the result by induction on $ |G| $. Let $ N $ be a non-trivial normal subgroup of $ G $. Then $ G/N $ satisfies property \eqref{ee:star} by Lemma \ref{G/Nsatifies}(b) and hence $ G/N $ is a solvable group. If $ N_{1} $ and $ N_{2} $ are two minimal normal subgroups of $ G $, then $ G/N_{1} $ and $ G/N_{2} $ are solvable. Hence $ G $ is solvable. We may assume that $ G $ has a unique non-abelian minimal normal subgroup $ N $. If $ N=S_{1}\times S_{2}\times \cdots \times S_{k} $,  where $ S_{i}\cong S $, $ S $ is a simple group and $ i=1,2, \dots ,k $, then by Lemma \ref{MT-V11Theorem1.1}, there exists $ \theta \in \Irr(N) $ which is extendible to $ G $. Note that $ \theta = \phi_{1} \times \phi_{2} \times \cdots \times \phi_{k}  $ with $ \phi_{i} \in \Irr(S_{i}) $ for each $ i\in \{1,2, \dots ,k\} $. Suppose that $ k\geqslant 2 $. Since $ \phi_{1} $ is non-linear, we may assume that $ \phi_{1} $ vanishes on a $ p $-element $ x_{1}\in S_{1} $ for some prime $ p $ by \cite[Theorem B]{MNO00}. Suppose that $ p $ is odd.  Note that $ 2\mid |N| $ and let $ y_{2} \in S_{2}$ be a $ 2 $-element. Then $ \theta(x_{1}y_{2})=\phi_{1}(x_{1})\phi_{2}(y_{2})\cdots\phi_{k}(1)=0 $ and $ p\mid \gcd(\ord(x_{1}), \ord(x_{1}y_{2})) $. Hence $ G $ does not satisfy \eqref{ee:star}. Suppose that $ p $ is even. Then there is a prime $ q\geqslant 5 $ such that $ q\mid |N| $ since by \cite[Theorem 3.10]{Isa06}, $ \pi(|G|)\geqslant 3 $. Let $ y_{2}\in S_{2} $ be a $ q $-element. Note that $ x_{1}y_{2} $ and $ y_{2} $ are vanishing elements of $ G $ by Lemma \ref{pdefectzero5ormore} and Lemma \ref{Bro16Lemma2.2}. Since $ q\mid \gcd(\ord(y_{2}), \ord(x_{1}y_{2})) $, the result follows.

We may assume that $ N$ is a simple group. Since $ N $ is the unique minimal normal subgroup of $ G $, $ \mathbf{C}_{G}(N)=1 $ and so $ G $ is almost simple. Let $ N$ be a sporadic simple group or $ ^{2}\mathrm{F}_{4}(2)' $. Table \ref{SporadicTable} below contains an irreducible character $ \theta $ of $ N $ of $ p $-defect zero for some odd prime $ p $ and two elements of distinct orders divisible by $ p $. The result that $ G $ does not satisfy property \eqref{ee:star} follows from Lemma \ref{Bro16Lemma2.2}. We shall use the character tables and notation in the \Atlas{} \cite{CCNPW85}.

\begin{center}\label{SporadicTable}
\begin{tabular}{|c|c|c|c|}
\cline{1-4}
$ N $ & $ \theta(1) $ & $ \ord(v\mathcal{C}_{1}) $  & $ \ord(v\mathcal{C}_{2}) $ \\
\cline{1-4}
$ M_{11} $  &   $ \chi_{9}(1)=45 $   &   $ 3A $    &  $ 6A $ \\
\cline{1-4}
$ M_{12} $  &   $ \chi_{8}(1)=55 $   &   $ 5A $    &  $ 10A $ \\
\cline{1-4}
$ M_{22} $  &   $ \chi_{3}(1)=45 $   &   $ 3A $    &  $ 6A $\\
\cline{1-4}
$ M_{23} $  &   $ \chi_{3}(1)=45 $   &   $ 3A $   &  $ 6A $ \\
\cline{1-4}
$ M_{24} $  &   $ \chi_{3}(1)=45 $   &   $ 5A $   &  $10A$ \\
\cline{1-4}
$ J_{2} $  &   $ \chi_{18}(1)=225 $   &   $ 5A $   &  $ 10A $ \\
\cline{1-4}
$ Suz $  &   $ \chi_{10}(1)=10725 $   &   $ 5A $   &  $10A$ \\
\cline{1-4}
$ HS $  &   $ \chi_{19}(1)=1750 $   &   $ 5A $   &  $ 10A $ \\
\cline{1-4}
$ M^{c}L $  &   $ \chi_{9}(1)=1750 $   &   $ 5A $   &  $ 10A $\\
\cline{1-4}
$ Co_{3} $  &   $ \chi_{6}(1)=896 $   &   $ 7A $   &  $ 14A $ \\
\cline{1-4}
$ Co_{2} $  &   $ \chi_{5}(1)=1771 $   &   $ 7A $   &  $ 14A $ \\
\cline{1-4}
$ Co_{1} $  &   $ \chi_{4}(1)=1771 $   &   $ 11A $   &  $ 22A $ \\
\cline{1-4}
$ He $  &   $ \chi_{9}(1)=1275 $   &   $ 5A $   &  $ 10A $ \\
\cline{1-4}
 $ Fi_{22} $ & $ \chi_{4}(1)=1001 $ & $ 7A $ & $ 14A $ \\
\cline{1-4}
$ Fi_{23} $ & $ \chi_{8}(1)=106743 $ & $ 7A $ & $ 14A $ \\
\cline{1-4}
$ Fi_{24}' $ & $ \chi_{4}(1)=249458 $ & $ 11A $ & $ 22A $\\
\cline{1-4}
$ HN $ & $ \chi_{2}(1)=133 $ & $ 7A $ & $ 14A $ \\
\cline{1-4}
$ Th $ & $ \chi_{6}(1)=30628 $ & $ 13A $ & $ 39A $\\
\cline{1-4}
$ B $ & $ \chi_{11}(1)=3214743741 $ & $ 11A $ & $ 22A $\\
\cline{1-4}
$ M $ & $ \chi_{9}(1)=36173193327999 $ & $ 17A $ & $ 34A $ \\
\cline{1-4}
$ J_{1} $ & $ \chi_{9}(1)=120 $ & $ 3A $ & $ 6A $\\
\cline{1-4}
$ O'N $ & $ \chi_{8}(1)=32395 $ & $ 5A $ & $ 10A $\\
\cline{1-4}
$ J_{3} $ & $ \chi_{2}(1)=85 $ & $ 5A $ & $ 10A $ \\
\cline{1-4}
$ Ly $ & $ \chi_{5}(1)=48174 $ & $ 7A $ & $ 14A $\\
\cline{1-4}
$ Ru $ & $ \chi_{2}(1)=378 $ & $ 7A $ & $ 14A $ \\
\cline{1-4}
$ J_{4} $ & $ \chi_{9}(1)=1187145 $ & $ 5A $ & $ 10A $ \\
\cline{1-4}
$ ^{2}\mathrm{F}_{4}(2)' $ & $ \chi_{8}(1)=325 $ & $ 5A $ & $ 10A $ \\
\hline
\end{tabular}
\end{center}

Suppose that $ N $ is an alternating group $ \A_{n} $, $ n\geqslant 5 $. For $ N\cong \mathrm{A}_{5} $ and $ N\cong \mathrm{A}_{7} $, our result follows by consulting the \Atlas{} \cite{CCNPW85}. Assume that $ N\cong \mathrm{A}_{6}\cong \PSL_{2}(9) $. Then for an almost simple $ G $ such that $ |G/N|\leqslant 2 $, our result follows by consulting the explicit character tables in the \Atlas{} \cite{CCNPW85}.
For the case when $ |G/N|=4 $, we obtain the character table in \GAP{} \cite{GAP16} and our result follows.

Let $ N\cong \A_{n} $, $ n\geqslant 8 $. By Lemma \ref{pdefectzero5ormore}, $ N $ has a $ 5 $-defect zero character. Note that $ N $ has two elements $ (12345) $ and $ (12345)(678) $ of orders $ 5 $ and $ 15 $, respectively. These two elements are vanishing elements of $ G $ by Lemma \ref{Bro16Lemma2.2} and hence $ G $ does not satisfy property \eqref{ee:star}.

Suppose that $ N $ has an element of order $ 2r $, $ r $ an odd prime. Then $ N $ has an irreducible character $ \theta $ of $ r $-defect zero by Lemma \ref{pdefectzero5ormore}. Since $ N $ has elements of order $ r $ and $ 2r $, the result follows since these two elements are vanishing elements by Lemma \ref{Bro16Lemma2.2}. We may assume that $ N $ has no element of order $ 2r $, $ r $ an odd prime. Then the centralizer of each involution contained in $ N $ is a $ 2 $-group. It follows from \cite[III, Theorem 5]{Suz61} that $ N $ is isomorphic to one of the following: $ \PSL_{2}(p) $, where $ p $ is a Fermat or Mersenne prime; $ \PSL_{2}(9) $; $ N\cong \PSL_{3}(4) $; $ N\cong ^{2}\!\!\mathrm{B}_{2}(2^{2n + 1}) $, $ n\geqslant 1 $. 

Thus far, we have dealt with the cases when $ N\cong \PSL_{2}(5)\cong \mathrm{A}_{5} $ and $ N\cong \PSL_{2}(9)\cong \mathrm{A}_{6} $. For $ N\cong \PSL_{2}(7) $ the result follows by checking the character tables in the \Atlas{} \cite{CCNPW85}. Suppose that $ N\cong \PSL_{2}(p) $, where $ p \geqslant 17 $, is a Fermat or Mersenne prime. Then the centralizer of an involution in $ N $ is a dihedral group of order $ 2^{n} $, $ n\geqslant 4 $. Hence $ N $ contains elements of order $ 4 $ and $ 8 $. Using Lemmas \ref{pdefectzero5ormore} and \ref{Bro16Lemma2.2}, we conclude that $ G $ does not satisfy property \eqref{ee:star}.

Suppose $ N\cong \PSL_{3}(4) $.  Then for an almost simple $ G $ such that $ |G/N|\leqslant 6 $, our result follows by checking the explicit character tables in the \Atlas{} \cite{CCNPW85}. For the case when $ |G/N|=12 $, we obtain the character table from \GAP{} \cite{GAP16} and by checking the pertinent information, our result follows.

We may assume that $ N\cong ^{2}\!\!\mathrm{B}_{2}(2^{2n+1}) $ with $ n\geqslant 1 $. The result basically follows from \cite[Proposition 3.13]{Mad19zp} but we shall prove it here for completeness. Now $ N $ has two conjugacy classes of elements of order $ 4 $ by \cite[Proposition 18]{Suz62}. Since the outer automorphism group is cyclic of odd order $ 2n + 1 $, the outer automorphisms cannot fuse these two conjugacy classes to one conjugacy class in $ G $. Hence $ G $ has two conjugacy classes of order $ 4 $ and so $ G $ does not satisfy property \eqref{ee:star}. This concludes our argument.
\end{proof}

\section{Normal $ 2 $-complements}
Given a finite set of positive integers $Y,$ the prime graph $ \Pi (Y) $ is defined as the undirected graph whose vertices are the primes $ p $ such that there exists an element of $ Y $ divisible by $ p $, and two distinct vertices $ p, q $ are adjacent if and only if there exists an element of $ Y $ divisible by $ pq $. The vanishing prime graph of $ G $, denoted by $ \Gamma(G) $, is the prime graph $ \Pi(\mathrm{Vo}(G)) $. We shall state a result on solvable groups with disconnected vanishing prime graphs. We first recall two definitions: 

A group $ G $ is said to be a \emph{$ 2 $-Frobenius group} if there exists two normal subgroups $ F $ and $ L $ of $ G $ such that $ G/F $ is a Frobenius group with kernel $ L/F $ and $ L $ is a Frobenius group with kernel $ F $. 

A group $ G $ is said to be a \emph{nearly $ 2 $-Frobenius group} if there exist two normal subgroups $ F $ and $ L $ of $ G $ with the following properties: $ F=F_{1}\times F_{2} $ is nilpotent, where $ F_{1} $ and $ F_{2} $ are normal subgroups of $ G $. Furthermore, $ G/F $ is a Frobenius group with kernel $ L/F $, $ G/F_{1} $ is a Frobenius group with kernel $ L/F_{1} $, and $ G/F_{2} $ is a $ 2 $-Frobenius group.

\begin{theorem} \cite[Theorem A]{DPSS10b} \label{DPSS10bTheoremA}
Let $ G $ be a finite solvable group. Then $ \Gamma(G) $ has at most two connected components. Moreover, if $ \Gamma(G) $ is disconnected, then $ G $ is either a Frobenius group or a nearly $ 2 $-Frobenius group.
\end{theorem}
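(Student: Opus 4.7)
The plan is to combine the classical theorem of Williams (and later refinements by Gruenberg--Kegel) stating that a solvable group whose prime graph on element orders is disconnected must be a Frobenius or a $2$-Frobenius group, with the Isaacs--Navarro--Wolf restriction on non-vanishing elements (Lemma \ref{INW99TheoremD}) and the nilpotent-orbit result of Corollary \ref{DPSS10aCorollary2.6}, to show that, for a solvable $G$, the vanishing prime graph $\Gamma(G)$ differs from the full prime graph $\Pi$ on all element orders of $G$ at most by losing the vertex $2$ and edges incident to $2$.

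First I would prove that every vertex and every edge of $\Pi$ involving an odd prime is inherited by $\Gamma(G)$. By Lemma \ref{INW99TheoremD}, any non-vanishing $x\in G$ has image $x\mathbf{F}(G)$ a $2$-element of $G/\mathbf{F}(G)$. Consequently, any $g\in G$ whose order has an odd prime factor and which does not lie in $\mathbf{F}(G)$ is automatically vanishing. The elements inside $\mathbf{F}(G)$ are handled by Corollary \ref{DPSS10aCorollary2.6}: since $\mathbf{F}(G)$ is nilpotent, it contains a single vanishing element whose order is divisible by every prime in $\pi(\mathbf{F}(G))$, so these primes form a clique in $\Gamma(G)$. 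Hence $\Gamma(G)$ is obtained from $\Pi$ by deleting at most the vertex $2$ and edges incident to it. Since $\Pi$ has at most two components for solvable $G$ by the classical theorem, a careful analysis of what can be lost on passage to $\Gamma(G)$ — again using Lemma \ref{INW99TheoremD} to control which odd primes of $\Pi$ are linked only through $2$ — shows that $\Gamma(G)$ still has at most two components.

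Finally, if $\Gamma(G)$ is disconnected, I would pull the partition $\pi(\mathrm{Vo}(G))=\pi_1\sqcup\pi_2$ back to $\Pi$. When $\Pi$ itself is disconnected along the same partition, the classical classification immediately gives that $G$ is Frobenius or $2$-Frobenius, both of which are subsumed by the stated conclusion. The main obstacle, and the reason for the wider ``nearly $2$-Frobenius'' conclusion, is the case in which $\Pi$ is connected solely through the prime $2$. Here Corollary \ref{DPSS10aCorollary2.6} forces $\mathbf{F}(G)$ to split as a direct product $F_1\times F_2$ according to the two would-be components of $\Gamma(G)$, and one must analyse the induced Frobenius actions on $G/F_1$ and $G/F_2$ to assemble the nearly $2$-Frobenius structure described in the definition preceding the theorem. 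Verifying that these actions are compatible with the factorisation of $\mathbf{F}(G)$, and that no further splitting occurs when one passes to the Frobenius complements, is the most delicate part of the argument.
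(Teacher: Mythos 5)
This statement is not proved in the paper at all: it is quoted verbatim from Dolfi--Pacifici--Sanus--Spiga \cite[Theorem A]{DPSS10b}, so the only thing to assess is whether your sketch would actually reprove that result, and it would not: there are genuine gaps at both of its hinges. First, your claim that $\Gamma(G)$ differs from the full prime graph $\Pi$ on element orders only by the vertex $2$ and its incident edges is false. Lemma \ref{INW99TheoremD} says that for a non-vanishing $x$ the coset $x\mathbf{F}(G)$ is a $2$-element of $G/\mathbf{F}(G)$; it does \emph{not} say that every element outside $\mathbf{F}(G)$ of order divisible by an odd prime is vanishing --- such an element can be non-vanishing provided its $2'$-part lies in $\mathbf{F}(G)$. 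Worse, Corollary \ref{DPSS10aCorollary2.6} carries the hypothesis $K\cap\Van(G)\neq\emptyset$, and for $K=\mathbf{F}(G)$ this can fail: in the Frobenius group of order $21$ the Fitting subgroup is $C_7$ and no element of order $7$ vanishes (the two degree-$3$ characters take the nonzero values $(-1\pm\sqrt{-7})/2$ there), so $\Pi$ has vertices $\{3,7\}$ while $\Gamma(G)$ has only the vertex $3$. Thus odd vertices (and odd--odd edges) of $\Pi$ can disappear in $\Gamma(G)$, and since a subgraph of a graph with at most two components can have arbitrarily many components, the bound ``at most two'' cannot be transferred from $\Pi$ to $\Gamma(G)$ the way you propose.

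Second, the disconnected case is precisely where the real work lies, and your sketch assumes rather than proves it. The hard situation is when $\Gamma(G)$ is disconnected while $\Pi$ is connected --- and the connection need not run ``solely through the prime $2$'': it can run through non-vanishing odd-order elements of $\mathbf{F}(G)$, exactly as in Frobenius kernels. Saying that Corollary \ref{DPSS10aCorollary2.6} ``forces $\mathbf{F}(G)$ to split as $F_1\times F_2$ according to the two would-be components'' again presupposes that $\mathbf{F}(G)$ meets $\Van(G)$, and even granting a splitting of $\mathbf{F}(G)$ you still must produce the Frobenius structure of $G/F$, $G/F_1$ and the $2$-Frobenius structure of $G/F_2$ demanded by the definition of a nearly $2$-Frobenius group; none of that follows from the two lemmas you invoke. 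That analysis (Hall subgroups, fixed-point-free actions on the relevant chief factors, control of non-vanishing elements beyond Lemma \ref{INW99TheoremD}) is the substance of \cite{DPSS10b}, and the Gruenberg--Kegel--Williams classification of disconnected prime graphs of solvable groups cannot substitute for it, because the hypothesis ``$\Gamma(G)$ disconnected'' is strictly weaker than ``$\Pi$ disconnected''.
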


The following is a classification of Frobenius complements.

\begin{theorem}\cite[Theorem 1.4]{Bro01}\label{Bro01Theorem1.4}
Let $ G $ be a Frobenius group with Frobenius complement $ M $. Then $ M $ has a normal subgroup $ N $ such that all Sylow subgroups of $ N $ are cyclic and one of the following holds:
\begin{itemize}
\item[(a)] $ M/N \cong 1 $;
\item[(b)] $ M/N \cong \mathrm{V}_{4} $, the Sylow $ 2 $-subgroup of the alternating group $ \mathrm{A}_{4} $;
\item[(c)] $ M/N \cong \mathrm{A}_{4} $;
\item[(d)] $ M/N \cong \mathrm{S}_{4} $;
\item[(e)] $ M/N \cong \mathrm{A}_{5} $;
\item[(f)] $ M/N \cong \mathrm{S}_{5} $.
\end{itemize}
\end{theorem}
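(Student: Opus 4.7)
The plan is to exploit the classical constraint on Frobenius complements: if $ M $ is a Frobenius complement then every subgroup of $ M $ of order $ pq $, where $ p $ and $ q $ are (not necessarily distinct) primes, must be cyclic. This is a consequence of the fixed-point-free action of $ M $ on the Frobenius kernel; equivalently, the centralizer in $ M $ of every non-identity element is cyclic.

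First I would deduce structural information about the Sylow subgroups of $ M $. For any odd prime $ p $, taking $ q=p $ in the observation above shows that every subgroup of order $ p^{2} $ is cyclic, so the Sylow $ p $-subgroup is cyclic. For $ p=2 $, the classical result on $ 2 $-groups all of whose abelian subgroups of rank $ 2 $ are cyclic forces the Sylow $ 2 $-subgroup of $ M $ to be cyclic or generalized quaternion. I would then define $ N $ to be a normal subgroup of $ M $ maximal with respect to the property that all its Sylow subgroups are cyclic; by Zassenhaus's theorem on Z-groups, $ N $ is metacyclic.

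The main obstacle is the analysis of $ M/N $. If the Sylow $ 2 $-subgroup of $ M $ is already cyclic, then $ M $ is itself a Z-group, $ M=N $, and we land in case (a). Otherwise the Sylow $ 2 $-subgroup of $ M $ is generalized quaternion; by the Brauer-Suzuki theorem $ M $ has a unique involution, which lies in $ \Center(M)\cap N $. The quotient $ M/N $ then inherits a Sylow $ 2 $-subgroup which is either trivial or a Klein four-group, and one studies it via Dickson's classification of the finite subgroups of $ \PGL_{2} $ over an algebraically closed field of characteristic zero. The only groups in this list compatible with the Frobenius-complement constraints are $ \mathrm{V}_{4} $, $ \mathrm{A}_{4} $, $ \mathrm{S}_{4} $, $ \mathrm{A}_{5} $ and $ \mathrm{S}_{5} $, corresponding to the Klein four-group and to the rotational symmetry groups of the Platonic solids, which yields cases (b)--(f). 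One finishes by verifying realizability: $ \SL_{2}(3) $ realises $ \mathrm{A}_{4} $, $ \SL_{2}(5) $ realises $ \mathrm{A}_{5} $, and analogous binary extensions handle the remaining quotients.
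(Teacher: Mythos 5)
The paper does not prove this statement at all: it is quoted from Brown \cite[Theorem 1.4]{Bro01} and used as a black box, so your sketch has to stand on its own against the classical literature. Your opening reductions are fine (every subgroup of order $pq$ of a Frobenius complement is cyclic, hence Sylow subgroups are cyclic for odd primes and cyclic or generalized quaternion for $p=2$; a complement of even order has a unique, central involution --- which, incidentally, is elementary here and does not need Brauer--Suzuki). But the core of the argument has two genuine gaps. First, taking $N$ to be a normal subgroup of $M$ \emph{maximal} with all Sylow subgroups cyclic does not produce the stated quotients. Take $M\cong Q_{8}$, which is a bona fide Frobenius complement (it acts fixed-point-freely on $\Z_{3}\times\Z_{3}$): the maximal normal subgroups with cyclic Sylow subgroups are the three cyclic subgroups of order $4$, giving $M/N\cong \Z_{2}$, which is not on the list; the $N$ that works is the \emph{smaller} subgroup $\Center(Q_{8})$, with quotient $\mathrm{V}_{4}$. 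So the construction of $N$ must be different (in the standard treatments one works with $\mathbf{O}(M)$ and, in the solvable case, with $\mathbf{O}_{2}$ of $M/\mathbf{O}(M)$ and the isomorphism $\Aut(Q_{8})\cong \mathrm{S}_{4}$), and cyclic or dihedral quotients have to be excluded or absorbed by an argument you have not supplied.

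Second, the step that is supposed to do all the work --- ``one studies $M/N$ via Dickson's classification of the finite subgroups of $\PGL_{2}$ over an algebraically closed field of characteristic zero'' --- is never justified: no embedding of $M/N$ into $\PGL_{2}(\C)$ is produced, and in fact no such embedding can exist in general, because the finite subgroups of $\PGL_{2}(\C)$ are exactly the cyclic, dihedral, $\mathrm{A}_{4}$, $\mathrm{S}_{4}$ and $\mathrm{A}_{5}$ groups, so $\mathrm{S}_{5}$ (case (f), realized by $\SL_{2}(5).2$ complements) could never arise by this route; your argument is therefore inconsistent with the very statement it aims at. The auxiliary claim that the Sylow $2$-subgroup of $M/N$ is trivial or Klein four is also false in cases (d) and (f), where it is dihedral of order $8$. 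The accepted proofs (Zassenhaus's theorem on Frobenius complements; Brown's memoir) split instead into the nonsolvable case, where $M$ has a normal subgroup $\SL_{2}(5)\times Z$ of index at most $2$ with $Z$ a Z-group of order coprime to $30$, and the solvable case, where one bounds $M/N$ inside $\Aut(Q_{8})\cong \mathrm{S}_{4}$; as it stands your sketch does not reconstruct either half.
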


\begin{proof}[\textbf{Proof Theorem B}]
We first assume that $ G $ satisfies property \eqref{e:star}.
Since $ G $ is solvable, $ G $ contains at most two vanishing conjugacy classes by Theorem \ref{DPSS10bTheoremA}. If $ G $ has one vanishing class, then every irreducible character of $ G $ vanishes on at most one conjugacy class and by \cite[Proposition 2.7]{Chi99}, $ G $ is a Frobenius group with a Frobenius complement of order $ 2 $ and an odd order kernel. Hence $ G $ has an abelian normal $ 2 $-complement. Suppose that $ G $ has exactly two conjugacy classes. Note that $ \ord(\vC_{1}) \not= \ord(\vC_{2}) $. Then every irreducible character of $ G $ vanishes on at most two conjugacy classes. Using Theorem \ref{BCG00Theorem1}, we have two cases. Suppose that Theorem \ref{BCG00Theorem1}(b)(ii) holds. Then $ G $ has at least two vanishing conjugacy classes of order $ 3 $, a contradiction. Assume that Theorem \ref{BCG00Theorem1}(b)(i) holds. Then $ G $ has one vanishing conjugacy class with elements of order $ 2 $ contained in the Frobenius complement of $ G/Z $. By Lemma \ref{QiaLemma2}(b), $ G $ is a Frobenius group with an abelian kernel and complement of order two, that is, $ Z=1 $ and the result follows. 

Assume that $ G $ satisfies property \eqref{ee:star} and suppose $ \gcd(\ord(v\mathcal{C}_{i}),\ord(v\mathcal{C}_{j}))=2 $ for some $ i\neq j $. Let $ G $ be nilpotent. Then $ G=P_{2}\times H $ with $ P_{2} $, the Sylow $ 2 $-subgroup of $ G $ and $ H $, a nilpotent group of odd order. If $ H $ is non-abelian, then $ H $ has a vanishing $ h $ of $ G $ by \cite[Theorem B]{INW99}. This means that $ \chi(h)=\theta_{1}(1)\times \theta_{2}(h)=\theta_{2}(h)=0 $ for some $ \chi=\theta_{1}\times \theta_{2}\in \Irr(P_{2}\times H) $ with $ \theta_{1}\in \Irr(P_{2}) $ and $ \theta_{2}\in \Irr(H) $. It follows that for any $ x\in P_{2} $, $ \chi(xh)=\theta_{1}(x)\times \theta_{2}(h)=0 $ and so $ xh $ and $ h $ are vanishing elements of $ G $, a contradiction. Thus $ H $ is abelian which implies that $ G $ has a normal abelian $ 2 $-complement.

Let $ 2=p_{1} < p_{2} < \dots < p_{n} $ and let $ P_{i} $ be a Sylow $ p_{i} $-subgroup of $ G $ for $ i\in \{1,2, \dots, n \} $. Suppose  that $ G $ is a non-nilpotent supersolvable group and consider $ \mathbf{F}(G) $. If $ \pi (|\mathbf{F}(G)|)=1 $, then by \cite[Theorems 6.2.5 and 6.2.2]{Bec71}, $ \mathbf{F}(G) $ is the $ p_{n} $-subgroup $ P_{n} $ of $ G $. Let $ M $ be a normal subgroup of $ G $ such that $ M/\mathbf{Z}(\mathbf{F}(G)) $ is a chief factor of $ G $. Then $ M\setminus \mathbf{Z}(\mathbf{F}(G)) $ is a conjugacy class of $ G $ since $ M\setminus \mathbf{Z}(\mathbf{F}(G))\subseteq \Van(G) $ and $ \gcd(\ord(v\mathcal{C}_{i}),\ord(v\mathcal{C}_{j}))\leqslant 2 $. Hence $ M/\mathbf{Z}(\mathbf{F}(G)) $ is cyclic and so $ M $ is abelian. Thus $ M=\mathbf{F}(G)=P_{n} $.

Suppose that $ p_{n-1} $ is an odd prime. Then $ P_{n-1}\mathbf{F}(G)/\mathbf{F}(G) $ is a normal subgroup of $ G/\mathbf{F}(G) $. Thus $ P_{n-1}\mathbf{F}(G)\setminus \mathbf{F}(G) $ is a conjugacy class since $ \gcd(\ord(v\mathcal{C}_{i}),\ord(v\mathcal{C}_{j}))\leqslant 2 $. By Lemma \ref{QiaLemma2}(b), $ \mathbf{F}(G)P_{n-1}P_{n-2} $ is a Frobenius group of kernel $ \mathbf{F}(G)P_{n-1} $ and complement of order $ p_{n-2} $. This means that the kernel $ \mathbf{F}(G)P_{n-1} $ is nilpotent, that is, $ \mathbf{F}(G)P_{n-1}=\mathbf{F}(G)\times P_{n-1} $. Since $ P_{n-1}P_{n-2}\cdots P_{1} $ is supersolvable and since by \cite[Theorems 6.2.5 and 6.2.2]{Bec71}, $ P_{n-1} $ is normal in $ P_{n-1}P_{n-2}\cdots P_{1} $, we obtain that $ P_{n-1} $ is a normal subgroup of $ G $, a contradiction since $ \mathbf{F}(G)=P_{n} $. Then $ P_{n-2}=P_{1} $ is a Sylow $ 2 $-subgroup of $ G $ and so $ \pi(|G|)\leqslant 3 $. Hence $ \mathbf{F}(G)P_{n-1} $ is a metabelian normal $ 2 $-complement of $ G $.

Suppose that $ \pi(|\mathbf{F}(G)|)\geqslant 2 $. Let $ \mathbf{F}(G)=Q_{1}\times Q_{2}\times \cdots \times Q_{n} $, where $ 2=q_{1} < q_{2} < \dots < q_{n} $ and let $ Q_{i} $ be the Sylow $ q_{i} $-subgroup of $ \mathbf{F}(G) $ for $ i\in \{1,2, \dots, m \} $. Note that $ Q_{i}\setminus \mathbf{Z}(Q_{i})\subseteq \Van(G) $ by \cite[Theorem B]{INW99} since $ G $ is supersolvable and so all the non-vanishing elements of $ G $ are contained in $ \mathbf{Z(\mathbf{F}(G))}=\mathbf{Z}(Q_{1})\times \mathbf{Z}(Q_{2})\times \cdots \times \mathbf{Z}(Q_{n}) $. If $ q_{i} $ is an odd prime and there exists a $ q_{i} $-element $ m $ which is a vanishing element, then by Corollary \ref{DPSS10aCorollary2.6}, there exists a vanishing element $ mn $ whose order is divisible by every prime in $ \pi(|\mathbf{F}(G)|) $, a contradiction since $ \gcd(\ord(m),\ord(mn)) > 2 $. Hence $ Q_{i} $ is abelian for all odd $ q_{i} $'s. Consider $ Q_{i} $, the Sylow $ 2 $-subgroup of $ \mathbf{F}(G) $. If $ Q_{1}\setminus \mathbf{Z}(Q_{1}) $ has an element of order greater than $ 2 $, then by Corollary \ref{DPSS10aCorollary2.6} and the above argument, we obtain a contradiction. Then $ Q_{1}\setminus \mathbf{Z}(Q_{1}) $ consists only of involutions. By \cite[Theorem C]{BDS09}, $ Q_{1} $ is a direct product of an elementary abelian $2$-group and a Frobenius group with Frobenius complement of order $2$, a contradiction. Hence $ Q_{1} $ is abelian and therefore $ \mathbf{F}(G) $ is abelian. Thus $ G $ is metabelian. 

Finally suppose $ 2=p_{1} < p_{2} < \dots < p_{n} $ and let $ P_{i} $ be a Sylow $ p_{i} $-subgroup of $ G $ for $ i\in \{1,2, \dots, n \} $. By \cite[Theorems 6.2.5 and 6.2.2]{Bec71}, $ H=P_{n}P_{n-1}\dots P_{2} $ is a normal subgroup of $ G $ and therefore $ H $ is a metabelian $ 2 $-complement as required.

We may now assume that $ G $ is not supersolvable and $ \textbf{O}_{2}(G)=1 $. Suppose that $ \Gamma(G) $ consists of a single vertex of an odd prime. Since $ G $ satisfies property \eqref{ee:star}, we have that $ G $ has one vanishing conjugacy class. This means that every irreducible character of $ G $ vanishes on at most one conjugacy class and by \cite[Proposition 2.7]{Chi99}, $ G $ is a Frobenius group with a Frobenius complement of order $ 2 $, a contradiction. So if $ \Gamma(G) $ is connected, then property \eqref{ee:star} implies that every element of $ \mathrm{Vo}(G) $ is divisible by $ 2 $. By \cite[Corollary B]{DPSS09}, $ G $ has a normal nilpotent $ 2 $-complement as required. We may assume that $ \Gamma(G) $ is disconnected. Then by Theorem \ref{DPSS10bTheoremA}, $ G $ is either a Frobenius group or a nearly $ 2 $-Frobenius group. 

Suppose that $ G $ is a Frobenius group. Assume further that the Frobenius complement of $ G $ has odd order. Let $ H $ be a maximal subgroup of $ G $ that contains $ G' $. Then $ |G/H|=p $ for some odd prime $ p $. Note that $ \mathbf{F}(G) \leq H $. Hence $ G\setminus H\subseteq \Van(G) $. Since $ p $ divides the order of every element in $ G\setminus H $. By Lemma \ref{QiaLemma2}(a), $ G\setminus H $ has at least two conjugacy classes, $ v\mathcal{C}_{1} $ and $ v\mathcal{C}_{2} $ say, and $ p\mid \gcd(\ord(v\mathcal{C}_{1}),\ord(v\mathcal{C}_{2})) $, contradicting our hypothesis. We may assume that the Frobenius complement has even order. Denote it by $ M $. Then the Frobenius kernel $ K $ is abelian. Using Theorem \ref{Bro01Theorem1.4}, we have that $ M $ has a unique normal subgroup $ N $ such that all the Sylow subgroups of $ N $ are cyclic and $ M/N\in \{1, \mathrm{V}_{4}, \mathrm{A}_{4}, \mathrm{S}_{4}, \mathrm{A}_{5}, \mathrm{S}_{5} \} $. Since $ G $ is solvable we need not consider $ \mathrm{A}_{5}, \mathrm{S}_{5} $. Note that $ N $ is metacyclic and supersolvable by \cite[p. 290]{Rob95}. Suppose that $ M/N\in \{1, \mathrm{V}_{4}\} $. Then since $ N $ is supersolvable, the Hall $ 2' $-subgroup $ R $ of $ N $ is normal in $ N=M $. Now $ KR $ is a normal $ 2 $-complement of $ G $. Also note that $ KR $ is of derived length at most $ 3 $ and thus of Fitting height at most $ 3 $, as required. Suppose $ M/N\cong \mathrm{A}_{4} $. Then there exists a normal subgroup $ T $ of $ G $ such that $ |G/T|=3 $. The result follows using the argument above in the case when the Frobenius complement is of odd order. We now suppose that $ M/N\cong \mathrm{S}_{4} $. Note that $ \Gamma(G) $ has two connected components and vertex $ 3 $ is isolated. A Sylow $ 2 $-subgroup $ T $ of $ G $ is a generalized quaternion. This means that $ |T|=8 $ and $ |N| $ is of odd order, otherwise $ G $ does not satisfy property \eqref{ee:star}. If there is a prime $ r\neq 3 $ such that $ r\mid |N| $, then using \cite[Proposition 3.2]{DPSS10b}, there exists an vanishing element $ g $ such that $ \ord(g) $ is either divisible by $ 3s $ or $ rs $ for some prime $ s $ such that $ s\mid |M| $, a contradiction. But that means the Frobenius complement has a cyclic Sylow $ 3 $-subgroup of order $ k $ greater than $ 3 $. Then $ G $ has vanishing elements of orders $ 3 $ and $ k $, a contradiction. Hence $ N=1 $ and therefore $ G/K\cong \SSS_{4} $. The result follows.

Suppose that $ G $ is a nearly $ 2 $-Frobenius group. Then there exist two normal subgroups $ F $ and $ L $ of $ G $ with the following properties: $ F=F_{1}\times F_{2} $ is nilpotent, where $ F_{1} $ and $ F_{2} $ are normal subgroups of $ G $. Furthermore, $ G/F $ is a Frobenius group with kernel $ L/F $, $ G/F_{1} $ is a Frobenius group with kernel $ L/F_{1} $, and $ G/F_{2} $ is a $ 2 $-Frobenius group. Since $ G/F_{2} $ is a $ 2 $-Frobenius group and $ G/F $ is a Frobenius group with kernel $ L/F $, it follows that $ L/F_{2} $ is a Frobenius group with kernel $ F/F_{2} $. By \cite[Remark 1.2]{DPSS10b}, $ G/L $ is cyclic and $ L/F $ is cyclic with $ |L/F| $ odd. If $ |G/L| $ is odd, then using the argument in the first part of the previous paragraph, we obtain a contradiction. Hence we may assume that $ |G/L| $ is even. Since $ G/F_{1} $ is a Frobenius group with kernel $ L/F_{1} $, we conclude that $ L/F_{1} $ is nilpotent and $ \gcd(|G/L|, |L/F_{1}|)=1 $. Note that $ | F| $ is odd since $ \textbf{O}_{2}(G)=1 $. We consider $ G/F $, a Frobenius group with a cyclic kernel $ L/F $ and a cyclic Frobenius complement $ G/L $. It follows that the Hall $ 2' $-subgroup $ J/L $ of $ G/L $ is cyclic. Hence $ J/L $ is cyclic, $ L/F_{1} $ is nilpotent and $ F_{1} $ is nilpotent, that is $ J $ is a normal $ 2 $-complement of $ G $ with Fitting height at most $ 3 $. This concludes our proof.
\end{proof}

\begin{proof}[\textbf{Proof of Corollary C}]
If $ G $ satisfies property \eqref{e:star}, then $ G $ is a Frobenius group with an abelian kernel and complement of order two by the argument in the first paragraph of the proof of Theorem B. The converse holds because if $ G $ is a Frobenius group with an abelian kernel and complement of order two, then $ \Van(G) $ contains only one conjugacy class of elements of order two.
\end{proof}

\section*{Acknowledgements}
The authors would like to thank the reviewer for the careful reading of this article. Their comments and suggestions improved the presentation of the work.

\section*{Funding}
Sesuai Y. Madanha acknowledges the postdoctoral scholarship from University of KwaZulu-Natal. Bernardo G. Rodrigues acknowledges support of NRF through Grant Numbers 95725 and 106071.


\begin{thebibliography}{100}
\bibitem{Bec71} Bechtell, H. (1971). \textit{The theory of groups}. New Hampshire:Addison-Wesley.
\bibitem{BCG00} Bianchi, M., Chillag, D., Gillio, A. (2000). Finite groups in which every irreducible character vanishes on at most two conjugacy classes. \emph{Houston J. Math.} \textbf{26}:451--461.
\bibitem{BCLP07} Bianchi, M., Chillag, D., Lewis, M., Pacifici, E. (2007). Character degree graphs that are complete graphs. \emph{Proc. Amer. Math. Soc.} \textbf{135}:671--676.
\bibitem{Bro16} Brough, J. (2016). On vanishing criteria that control finite group structure. \emph{J. Algebra} \textbf{458}:207--215.
\bibitem{Bro01} Brown, R.  (2001). Frobenius groups and classical maximal orders. \emph{Mem. Amer. Math. Soc.} \textbf{151}(717) viii+110.
\bibitem{BDS09} Bubboloni, D., Dolfi, S., Spiga, P. (2009). Finite groups whose irreducible characters vanish only on $p$-elements. \emph{J. Pure Appl. Algebra} \textbf{213}:370--376.
\bibitem{Chi99} Chillag, D. (1999). On zeros of characters of finite groups. \emph{Proc. Amer. Math. Soc.} \textbf{127}:977--983.
\bibitem{CCNPW85} Conway, J. H., Curtis, R. T., Norton, S. P., Parker, R. A., Wilson, R. A. (1985). \textit{Atlas of Finite Groups}. Oxford:Clarendon Press.
\bibitem{DPSS09} Dolfi, S., Pacifici, E., Sanus, L., Spiga, P. (2009). On the orders of zeros of irreducible characters. \emph{J. Algebra} \textbf{321}:345--352.
\bibitem{DPSS10a} Dolfi, S., Pacifici, E., Sanus, L., Spiga, P. (2010). On the vanishing prime graph of finite groups. \emph{J. London Math. Soc. (2)} \textbf{82}:167--183.
\bibitem{DPSS10b} Dolfi, S., Pacifici, E., Sanus, L., Spiga, P. (2010). On the vanishing prime graph of solvable groups. \emph{J. Group Theory} \textbf{13}:189--206.
\bibitem{Fei93} Feit, W. (1993). Extending Steinberg characters. In \textit{Linear algebraic groups and their representations (Los Angeles, CA, 1992)}, Contemp. Math., Vol. 153. American Mathematical Society, pp. 1–-9.
\bibitem{GAP16} The GAP Group. (2016). GAP-Groups, Algorithms and Programming, Version 4.8.4. http://www.gap-system.org
\bibitem{GO96} Granville, A., Ono, K. (1996). Defect zero $p$-blocks for finite simple groups. \emph{Trans. Amer. Math. Soc.} \textbf{348}:331--347.
\bibitem{Isa06} Isaacs, I. M. (2006). \textit{Character Theory of Finite Groups}. Rhode Island: Amer. Math. Soc..
\bibitem{INW99} Isaacs, I. M., Navarro, G., Wolf, T. R. (1999). Finite group elements where no irreducible character vanishes. \emph{J. Algebra} \textbf{222}:413--423.
\bibitem{ZLS11} Li, Z., Shao, C., Zhang, J., Li, Z. (2001). Finite groups whose irreducible characters vanish only on elements of prime power order. \emph{International Electronic J. Algebra} \textbf{9}:114--123.
\bibitem{Mad19zp} Madanha, S.Y. (2020). Zeros of primitive characters of finite groups. \emph{J. Group Theory} \textbf{23}:193--216.
\bibitem{MT-V11} Magaard, K., Tong-Viet, H. P. (2011). Character degree sums in finite non-solvable groups. \emph{J. Group Theory} \textbf{14}:53--57.
\bibitem{MNO00} Malle, G., Navarro, G., Olsson, J. B. (2000). Zeros of characters of finite groups. \emph{J. Group Theory} \textbf{3}:353--368.
\bibitem{Qia02} Qian, G. (2002). Bounding the Fitting height of a finite solvable group by the number of zeros in a character table. \emph{Proc. Amer. Math. Soc.} \textbf{130}:3171--3176.
\bibitem{Rob19} Robati, S. M. (2019). Groups whose set of vanishing elements is the union of at most three conjugacy classes. \emph{Bull. Belg. Math. Soc. Simon Stevin} \textbf{26}:85--89.
\bibitem{Rob95} Robinson, D. J. S. (1995). \textit{A course in the theory of finite groups}.  Second Edition. New York-Berlin: Springer Verlag.
\bibitem{Suz61} Suzuki, M. (1961). Finite groups with nilpotent centralizers. \emph{Trans. Amer. Math. Soc.} \textbf{99}:425--470.
\bibitem{Suz62} Suzuki, M. (1962). On a class of double transitive groups. \emph{Ann. of Math.} \textbf{75}:105--145.
\bibitem{Wil81} Williams, J. S. (1981). Prime graph components of finite groups. \emph{J. Algebra} \textbf{69}:487--513.
\end{thebibliography}
\end{document}